\g@addto@macro\bfseries{\boldmath}
\newtheorem{proposition}{Proposition}
\newtheorem{theorem}[proposition]{Theorem}
\newtheorem{definition}[proposition]{Definition}
\newtheorem{lemma}[proposition]{Lemma}
\newtheorem{corollary}[proposition]{Corollary}
\newtheorem{remark}[proposition]{Remark}
\newcommand{\SSS}{\mathbb{S}}
\newcommand{\R}{\mathbb{R}}
\newcommand{\CP}{\mathbb{CP}}
\newcommand{\D}{\mathbb{D}}
\newcommand{\B}{\mathbb{B}}
\newcommand{\N}{\mathbb{N}}
\newcommand{\Leg}{\mathcal{L}eg}
\newcommand{\Cont}{\mathcal{C}ont}
\newcommand{\DCont}{\mathcal{D}\mathcal{C}ont}
\newcommand{\FCont}{\mathcal{F}\mathcal{C}ont}
\newcommand*{\defeq}{\mathrel{\vcenter{\baselineskip0.5ex \lineskiplimit0pt
                     \hbox{\scriptsize.}\hbox{\scriptsize.}}}%
                     =}
\def\blfootnote{\xdef\@thefnmark{}\@footnotetext}
\begin{document}
\title{A simple construction of positive loops of Legendrians}

\author{Dishant Pancholi}
\address{Chennai Mathematical Institute, H1 SIPCOT IT Park, Siruseri, Kelambakkam 603103, India.}
\email{dishant@cmi.ac.in}

\author{Jos\'e Luis P\'erez}
\address{Instituto de Ciencias Matem\'aticas CSIC-UAM-UC3M-UCM, C. Nicol\'as Cabrera, 13-15, 28049 Madrid, Spain}
\email{joseluis.perez@icmat.es}

\author{Francisco Presas}
\address{Instituto de Ciencias Matem\'aticas CSIC-UAM-UC3M-UCM, C. Nicol\'as Cabrera, 13-15, 28049 Madrid, Spain}
\email{fpresas@icmat.es}

\date{\today}
\subjclass{Primary: 53D10; Secondary: 57R17}
\begin{abstract}
We construct positive loops of Legendrian submanifolds in several instances. In particular, we partially recover G. Liu's result stating that any loose Legendrian admits a positive loop, under some mild topological assumptions on the Legendrian. Moreover, we show contractibility of the constructed loops under an extra topological assumption.
\end{abstract}
\maketitle
\section{Introduction}

\subsection{Motivation}\label{Motivation}

Consider a ($2n+1$)--dimensional co-oriented contact manifold $(M,\xi)$. Y. Eliashberg and L. Polterovich \cite{EP} introduced the notion of \emph{non-negative contact isotopy}, they showed that it induces a relation on the identity component of the group of contactomorphisms, $\Cont_0(M,\xi)$. This relation, which was also studied by Bhupal \cite{Bu}, is naturally reflexive and transitive but not necessarily anti-symmetric. We say that $\Cont_0(M,\xi)$ is strongly orderable if the relation is also anti-symmetric, i.e. it is a partial order on $\Cont_0(M,\xi)$. Analogously, we can define a relation in the universal cover $\widetilde{\Cont}_0(M,\xi)$ that again may fail to be anti-symmetric. We say that $\widetilde{\Cont}_0(M,\xi)$ is orderable if the relation defines a partial order on $\widetilde{\Cont}_0(M,\xi)$. Contact topology has embraced the study of this relation during the last few years \cite{AFM, EKP, EP, Gi, We}.

There is a relative version of the construction. Let $L$ be a Legendrian submanifold in a contact manifold ($M,\xi$) and denote by $\Leg(L)$ the space of all Legendrian submanifolds which are Legendrian isotopic to $L$. Non-negative Legendrian isotopies also define a relation on $\Leg(L)$ an we say that $\Leg(L)$ is orderable if this relation is anti-symmetric (respectively, the universal cover $\widetilde{\Leg}(L)$ is orderable if the analogous relation is anti-symmetric).

The existence of these partial orders can be checked in terms of the non-existence of positive loops of contactomorphisms (resp. Legendrians). The study of orderability for Legendrians and of the existence of positive (contractible) loops has been an active research area in contact topology. Thus, for instance, V. Colin, E. Ferrand and P. Pushkar \cite{CFP} studied the non--existence of positive loops of Legendrian submanifolds in $\SSS T^{*}M$ where the universal cover of $M$ is the $n$--dimensional real space. In the field of Lorentzian geometry, V. Chernov and S. Nemirovsky \cite{CN1, CN2, CN3} apply this topic to the study of causality in globally hyperbolic spacetimes. The orderability property of Legendrians gives rise to the existence of bi--invariant integer--valued metrics in the space of Legendrians \cite{CS}.

Recently, G. Liu \cite{Li1, Li2} has announced the existence of (contractible) positive loops for loose Legendrian submanifolds. The goal of this note is to offer a shorter proof of G. Liu's result under some extra assumptions.

\subsection{Statement of the results}

Consider a ($2n+1$)-dimensional manifold $M$ endowed with a contact structure $\xi$. An $n$-dimensional embedded submanifold $L^n\subset M^{2n+1}$ is called \textit{Legendrian} if its tangent space at each point is contained in the contact distribution.

The key remark of this article is the following

\begin{theorem}\label{thm:product}
Let $(M, \ker(\alpha))$ be a contact manifold and fix a positive constant $\varepsilon>0$. Consider the contact manifold $(M \times \D^2_{\varepsilon}(r, \theta), \ker(\alpha + r^2 d \theta))$. Any closed Legendrian submanifold in $M\times \D^2_{\varepsilon}$  admits a positive loop of Legendrians.
\end{theorem}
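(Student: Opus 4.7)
The plan is to use the rotational $\SSS^1$-symmetry of the contact form $\alpha + r^2 d\theta$. The vector field $\partial_\theta = -y\partial_x + x\partial_y$ on the disc factor extends trivially to $M \times \D^2_{\varepsilon}$ and is a contact vector field, since the form is $\theta$-invariant; its contact Hamiltonian is $\iota_{\partial_\theta}(\alpha + r^2 d\theta) = r^2 \geq 0$, vanishing exactly on the codimension-two binding $M \times \{0\}$. Crucially, the time-$2\pi$ flow of $\partial_\theta$ is the identity on $M \times \D^2_{\varepsilon}$, so for any closed Legendrian $L$, the family $L_t := \phi_{\partial_\theta}^t(L)$, $t \in [0, 2\pi]$, is a loop of Legendrians based at $L$ with non-negative generating Hamiltonian $r^2|_{L_t}$.

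This candidate loop is strictly positive off the locus $L_t \cap (M \times \{0\})$, where it degenerates. To upgrade to a strictly positive loop, I would perturb the Hamiltonian to $H_t = r^2 + \epsilon K_t$ with small $\epsilon > 0$ and a time-dependent function $K_t \colon M \times \D^2_{\varepsilon} \to \R$ satisfying
\begin{itemize}
\item[(a)] $H_t|_{L_t} > 0$ for all $t$, which requires $K_t > 0$ near the instantaneous degenerate locus, and
\item[(b)] the time-$2\pi$ flow of $H_t$ maps $L$ to itself, so the perturbed family is still a loop of $L$.
\end{itemize}
A natural tool is to use the order-two symmetry $\phi_{\partial_\theta}^\pi$: imposing an antisymmetry such as $K_{t+\pi} = -K_t \circ \phi_{\partial_\theta}^\pi$ makes the first-order effect of the perturbation cancel over the full period, while still leaving room to place positive bumps of $K_t$ exactly where the non-strict points of $L_t$ occur. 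As an explicit alternative, in Cartesian coordinates on $\D^2$, one can try $K_t = x\cos t - y \sin t + c$: the resulting disc-projected flow is a linear ODE solvable as $u(t) = e^{it} u_0 + (i\epsilon/2)\sin t$, which closes up at $t = 2\pi$, and the constant $c$ can be tuned so that the accumulated Reeb drift also vanishes.

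The hard part will be arranging (a) and (b) simultaneously for an \emph{arbitrary} closed Legendrian $L$. Condition (b) at $\epsilon = 0$ is automatic, and its linearization in $\epsilon$ amounts to a single closed condition on $K_t$ that the symmetry/explicit constructions above satisfy. The remaining nonlinear corrections for small $\epsilon$ should be absorbable by an implicit function argument in the space of contact Hamiltonians. The geometric content behind this flexibility is the Liouville nature of the $\D^2$-factor: it provides an abundant supply of contact Hamiltonians compactly supported near the binding, which is precisely what allows the construction to go through for every closed Legendrian without any topological assumption.
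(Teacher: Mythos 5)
Your first paragraph coincides with the paper's opening move: $\partial_\theta$ is a contact vector field with Hamiltonian $r^2\geq 0$ whose time-$2\pi$ flow is the identity, so its restriction to $L$ is a non-negative loop of Legendrians, degenerate exactly on $L\cap(M\times\{0\})$ (a proper subset of $L$, since an $(n+1)$-dimensional isotropic submanifold cannot sit inside the codimension-two contact submanifold $M\times\{0\}$ -- a point you should record). The gap is in the upgrade from non-negative to positive. This step is not a soft perturbation: it is the content of a genuine theorem of Eliashberg--Polterovich, in its Legendrian form due to Chernov--Nemirovski (\cite[Prop. 4.5]{CN3}, quoted as Lemma \ref{lem:main} in the paper), asserting that a non-constant non-negative loop of closed Legendrians can be converted into a positive one. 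Its proof does not perturb the Hamiltonian generically; it composes the loop with contact lifts of diffeotopies of $L$ -- whose Hamiltonians vanish on $L$, so they preserve non-negativity along the Legendrian while moving the degenerate locus around -- and then uses conjugation by time-shifts and concatenation to accumulate strict positivity over a full period. That mechanism, or something equivalent, is what your proof is missing.

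Both of your concrete substitutes fail. (i) The degenerate locus $L\cap(M\times\{0\})$ is fixed pointwise by the rotation, so a point $p$ there is degenerate at every time $t$ and you need $K_t(p)>0$ for all $t$; but $\phi^{\pi}_{\partial_\theta}(p)=p$, so the antisymmetry $K_{t+\pi}=-K_t\circ\phi^{\pi}_{\partial_\theta}$ forces $K_{t+\pi}(p)=-K_t(p)$, which is incompatible with positivity at $p$ for all $t$. (ii) For $K_t=x\cos t-y\sin t+c$ the disc flow does close up and one can tune $c$ (of order $\epsilon$) to cancel the accumulated Reeb drift; but along the rotating Legendrian the Hamiltonian behaves like $r^2+\epsilon r\cos(\theta_0+2t)+O(\epsilon^2)$, which is negative at points with $r\approx\epsilon/2$ at the times when the cosine is near $-1$ -- and such points exist on any $L$ meeting $M\times\{0\}$. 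Enlarging $c$ to restore positivity (one needs $c>\epsilon/4$) reintroduces a nonzero Reeb drift, and the time-$2\pi$ map then pushes $L$ off itself in the transverse Reeb direction, so the loop no longer closes. This is the essential tension: positivity forces the Hamiltonian to have positive average along the orbits of $L$, while closing up constrains that average, and no implicit-function argument in the space of global contact Hamiltonians resolves it. The resolution in Lemma \ref{lem:main} is precisely to exploit that the loop only needs to close set-theoretically on $L$, which is why the tangential reparametrizations of $L$ (invisible to the Hamiltonian along $L$) provide the needed freedom.
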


A loop of Legendrian submanifolds is called positive if the generating Hamiltonian of the loop is positive. The proof is extremely simple and the core of these notes is devoted to extract some corollaries of the result. The most important one is the next

\begin{theorem}\label{thm:loose}
Let $n\geq 2$. Fix a loose Legendrian submanifold $L^n$ in a contact manifold $(M^{2n+1}, \xi)$. Assume that its bundle $T^{*}L\oplus\R$ has two independent sections. Then $L$ admits a positive loop of Legendrians.
\end{theorem}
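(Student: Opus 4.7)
The plan is to deduce Theorem \ref{thm:loose} from Theorem \ref{thm:product}: we will Legendrian-isotope $L$ into a subset of $M$ contactomorphic to a product $V \times \D^2_\varepsilon$ of the kind handled by Theorem \ref{thm:product}, and then apply that theorem directly. The positive loop produced inside $V \times \D^2_\varepsilon$ is generated by a Hamiltonian that can be cut off to be compactly supported in the chart, hence extended by zero to $M$; pulling back along the Legendrian isotopy then yields a positive loop of $L$ in $M$.

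Thus the key task is to construct the Legendrian isotopy placing $L$ inside a product chart. Two ingredients enter. First, any Darboux chart $U \subset M$ may be identified contactomorphically with an open subset of $\R^{2n-1} \times \R^2$ carrying the form $\alpha_V + (x\, dy - y\, dx) = \alpha_V + r^2 d\theta$, where $\alpha_V$ is the standard contact form on $\R^{2n-1}$; restricting to the disk of radius $\varepsilon$ in the $\R^2$-factor exhibits a subset of $U$ of the form $V \times \D^2_\varepsilon$ satisfying the hypothesis of Theorem \ref{thm:product}. Second, one produces a formal Legendrian embedding $\widetilde f \co L \to V \times \D^2_\varepsilon$ formally Legendrian-isotopic to the inclusion $L \hookrightarrow M$: this is where the topological hypothesis enters, since the two independent sections of $T^*L \oplus \R$ yield a rank-$2$ trivial subbundle $\R^2 \hookrightarrow T^*L \oplus \R$ that plays the role of the $T\D^2$-direction in the normal bundle of a Legendrian in $V \times \D^2_\varepsilon$ sitting transversely to the $V$-fibers. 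A formal matching of Lagrangian framings in the two contact manifolds then produces the required formal embedding.

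With the formal embedding in hand, the loose hypothesis on $L$ together with $n \geq 2$ lets us invoke Murphy's h-principle for loose Legendrians, converting the formal Legendrian isotopy into an ambient contact isotopy $\Phi_t$ of $M$ with $\Phi_1(L) \subset V \times \D^2_\varepsilon$. Theorem \ref{thm:product} then provides a positive loop of $\Phi_1(L)$, and the conclusion follows by transport along $\Phi_t^{-1}$. The principal obstacle is the formal step: establishing that two independent sections of $T^*L \oplus \R$ genuinely suffice to produce a formal Legendrian embedding of $L$ into some product $V \times \D^2_\varepsilon$ in the formal Legendrian class of the original inclusion. This reduces to a compatibility check between Lagrangian framing data in the two ambient contact manifolds, together with a verification that the rank-$2$ trivial subbundle supplied by the sections can be realized as the $\D^2$-direction under the formal identification.
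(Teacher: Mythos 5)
Your overall architecture (Legendrian-isotope $L$ into a product $V\times\D^2_\varepsilon$, apply Theorem \ref{thm:product} there, and transport the positive loop back by conjugation) matches the paper's, but your choice of target chart breaks the argument at the crucial step. You place $V\times\D^2_\varepsilon$ inside a Darboux chart of $M$. A formal Legendrian isotopy contains, in particular, a smooth isotopy of the underlying embeddings, so your ``formal matching'' step already presupposes that $L$ can be smoothly isotoped into a ball; this fails whenever $L$ is not null-homotopic in $M$ (for instance the zero section of $J^1(N)$ for $N$ non-contractible, or a loose representative of any homotopically essential formal class, which exists by Theorem \ref{lem:Loose_Surj}). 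Moreover, even when that smooth obstruction vanishes, the existence of a formal Legendrian embedding of $L$ into $(\R^{2n+1},\xi_{std})$ requires a Lagrangian bundle monomorphism $TL\to\C^n$, i.e.\ triviality of $TL\otimes\C$ (the Gromov--Lees condition); this is strictly stronger than the hypothesis that $T^{*}L\oplus\R$ admits two independent sections (compare $L=\CP^2\times\SSS^1$, which has $\chi(L)=0$ but non-trivial $TL\otimes\C$). So the ``compatibility check between Lagrangian framing data'' that you defer to is not a check that can be passed in general: it is the place where the proof collapses.

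The paper circumvents both obstructions by never moving $L$ out of its own tubular neighborhood. The two sections of $T^{*}L\oplus\R$ are used to split the Weinstein neighborhood $J^1(L)\cong T^{*}L\times\R$ smoothly as $N\times\R^2$ for an open $(2n-1)$-manifold $N$; Lemma \ref{lem:top_1} transfers the formal contact class of $\xi$ to a formal contact structure on $N$, which the h-principle for contact structures on open manifolds makes genuine, and the h-principle for contact embeddings then yields a contact embedding of $(N\times\D^2_\varepsilon,\ker(\alpha_N+r^2d\theta))$ into $U_L$ in the correct formal class. Only then is Murphy's h-principle invoked (twice, using the looseness of $L$, which your sketch uses only once) to Legendrian-isotope $L$ onto a loose Legendrian inside this product, after which Theorem \ref{thm:product} applies. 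If you wish to keep your outline, you must replace the Darboux chart by such a product contact structure on a neighborhood of $L$ itself.
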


A Legendrian submanifold is \textit{loose} if there is a special chart in an open neighborhood of a domain of the Legendrian (see Definitions \ref{def:loose_chart},  \ref{def:loose}). Murphy proved that loose Legendrians satisfy an $h$--principle \cite{Mu}. Note that this definition assumes for $2n+1 \geq 5$.

In $3$--dimensional contact topology, there is an analogous older notion \cite{EF}. A Legendrian knot in a contact $3$--fold whose complement is overtwisted is called loose. They also satisfy an $h$--principle.

If $2n+1 \geq 5$, any Legendrian submanifold whose complement is overtwisted is loose. This is a consequence of the parametric and relative nature of the $h$--principle for overtwisted contact structures (see \cite{BEM}).

For didactical reasons, we will first prove the following particular case of Theorem \ref{thm:loose}.

\begin{theorem} \label{thm:main}
Let $n\geq 1$. Assume that a Legendrian submanifold $L^n$ in a contact manifold $(M^{2n+1}, \xi)$ satisfies that the bundle $T^{*}L\oplus\R$ has two independent sections. If $M \backslash L$ is overtwisted, then $L$ admits a positive loop of Legendrians.
\end{theorem}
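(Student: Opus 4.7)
The plan is to realize $L$, up to Legendrian isotopy in $M$, as a Legendrian inside a contact chart of the product form $(N^{2n-1}\times \D^2_\varepsilon,\,\ker(\alpha_N + r^2 d\theta))$ embedded in $(M,\xi)$, and then apply Theorem~\ref{thm:product} directly to recover a positive loop of Legendrians.

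First I would produce a formal model. The Weinstein neighborhood theorem identifies a tubular neighborhood of $L$ in $M$ with a neighborhood of the zero section in $J^1(L)=T^*L\times\R$, so the normal bundle of $L$ in $M$ is precisely $T^*L\oplus\R$. The hypothesis that $T^*L\oplus\R$ carries two independent sections splits off a trivial rank--$2$ summand from this normal bundle, which is exactly the formal data required to write $L$ as a formal Legendrian in a product $N\times\D^2_\varepsilon$. Here $N$ is built as an auxiliary contact manifold of dimension $2n-1$ whose Legendrian core absorbs the remaining rank--$(n-1)$ normal data of $L$.

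Next I would promote the formal model to a genuine contact embedding of $N\times\D^2_\varepsilon$ into $M$ containing a Legendrian $\tilde L$ that is Legendrian--isotopic to $L$ in $M$. For $n\geq 2$, the hypothesis that $M\setminus L$ is overtwisted forces $L$ to be loose (as recalled in the introduction), and Murphy's $h$--principle~\cite{Mu} upgrades the formal isotopy to a genuine Legendrian isotopy into the embedded product. For $n=1$ one instead relies on the classical flexibility of Legendrian knots in contact $3$--folds with overtwisted complement~\cite{EF} to perform the same step.

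Having produced $\tilde L\subset N\times\D^2_\varepsilon\hookrightarrow M$, Theorem~\ref{thm:product} supplies a positive loop of Legendrians for $\tilde L$ supported in the embedded product; conjugating its generating Hamiltonian by the Legendrian isotopy from $\tilde L$ to $L$ preserves positivity and yields the desired loop based at $L$. The main obstacle I would expect to be the second step, where one must verify that the overtwisted flexibility is strong enough to realize the formal product model relative to $L$. This is especially delicate in dimension $3$, where Murphy's theorem is unavailable and the sole source of flexibility is an overtwisted disk in the complement.
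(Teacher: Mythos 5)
Your outline for $n\geq 2$ is viable, but it is really the paper's proof of Theorem~\ref{thm:loose} (of which Theorem~\ref{thm:main} is a special case once $2n+1\geq 5$, since an overtwisted complement makes $L$ loose): one builds a genuine contact product chart $(N\times\D^2_{\varepsilon},\ker(\alpha_N+r^2d\theta))$ inside the tubular neighborhood --- note that this step needs Gromov's $h$--principle for contact structures on open manifolds and the isocontact embedding $h$--principle, not only Murphy's theorem, and you leave these tools unnamed --- and then moves $L$ into it through formal Legendrians via Theorem~\ref{lem:Loose_Surj}. The paper's actual proof of Theorem~\ref{thm:main} takes a genuinely different route that never isotopes $L$ at all: it tubes the tubular neighborhood to the overtwisted region to get a neighborhood $U_L\cong N\times\R^2$ of $L$ that is overtwisted at infinity (Lemma~\ref{lem:Nbhd}), manufactures a second overtwisted-at-infinity structure $\ker(\alpha_{ot}+r^2d\theta)$ on $N\times\R^2$ in the same formal class (Lemma~\ref{lem:top_1}, Theorem~\ref{thm:BEM}, Proposition~\ref{prop:OTInfty}), and applies Eliashberg's swindle (Lemma~\ref{lem:swindling}) to carry $\xi|_{U_L}$ onto the product model; since Theorem~\ref{thm:product} applies to \emph{any} Legendrian in the product, the image of $L$ under this diffeomorphism inherits a positive loop, which is conjugated back. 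What the swindle buys is precisely independence from any Legendrian flexibility.

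The genuine gap is your $n=1$ case, which is the only content of Theorem~\ref{thm:main} not already covered by Theorem~\ref{thm:loose}. There you must Legendrian-isotope $L$, inside $M$, into a chart contactomorphic to $(\SSS^1\times\D^2_{\varepsilon},\ker(dz+r^2d\theta))$. That chart is the standard (tight) neighborhood of a transverse knot, so the Legendrians it contains in the relevant smooth class satisfy Bennequin-type constraints on their classical invariants relative to the product framing; a Legendrian knot with overtwisted complement realizes arbitrary such invariants, and formal (hence genuine) Legendrian isotopy preserves them, so in general no Legendrian in the chart is even formally isotopic to $L$. Moreover the flexibility you invoke, \cite{EF}, is a coarse classification (up to contactomorphism) of topologically trivial knots and does not produce Legendrian isotopies into a prescribed chart. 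The paper sidesteps all of this by keeping $L$ fixed and changing the model instead: it performs half Lutz twists along a sequence of transverse curves in $\SSS^1\times\D^2_{\varepsilon}$ accumulating at the boundary so that the model becomes overtwisted at infinity, imports a positive loop on this Lutz-twisted model from \cite{CP} (Theorem~\ref{thm:product} no longer applies there, since the $\partial_z$ loop is destroyed), and transfers it to $U_L$ by Lemma~\ref{lem:swindling}. You would need to replace your step two for $n=1$ by an argument of this kind.
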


We remark that this result covers the $3$--dimensional situation that is not included in Theorem \ref{thm:loose}.

Realize that the hypothesis of $T^{*}L\oplus\R$ having two independent sections is pretty mild. Elaborating on the orientable Legendrians case, some sufficient conditions for it to be satisfied are:
\begin{itemize}
    \item $\chi(L)=0$. This, in particular, covers odd dimensional Legendrians.
    \item $w_2(L)=0$. Since, this implies that $w_2(T^{*}L\oplus\R)=0$ and by the definition of this obstruction class in the even dimensional case, the vanishing of the class implies the existence of two independent sections. In particular, this covers even dimensional Legendrians with even Euler class.
    \item Any Legendrian submanifold whose tangent bundle is trivialized by direct sum with $\R$. This obviously covers all the spheres.
\end{itemize}

There are simple examples of manifolds not satisfying that property. For instance, $L=\CP^2$ is a manifold whose $1$--jet bundle $T^{*}\CP^2\oplus\R$ does not admit two independent sections.


Let us move now to the study of positive contractible loops. We prove the following key remark.

\begin{theorem}\label{thm:R4}
Let $(M,\xi=\ker(\alpha))$ be a contact manifold and on the product $M\times D_{\varepsilon}^4(r_1,\theta_1,r_2,\theta_2)$ define the contact form $\widetilde{\alpha}=\alpha+r_1^2d\theta_1+r_2^2d\theta_2$. Define the domain
$$M^{+}=\{(p,r_1,\theta_1,r_2,\theta_2)\in M\times D_{\varepsilon}^4 \text{ such that } 0<r_1<r_2\}.$$
 Any Legendrian embedding
$L\hookrightarrow  M^{+}\subset M\times D_{\varepsilon}^4$ admits a contractible positive loop of Legendrians on $M\times D_{\varepsilon}^4$.
\end{theorem}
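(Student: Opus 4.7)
\emph{Plan.} The plan is to produce the positive loop as the orbit of an explicit one--parameter subgroup of $SU(2)$ acting unitarily on the $D^4_\varepsilon$--factor, and to obtain contractibility for free from $\pi_1(SU(2))=0$.

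First, I observe that $r_1^2\,d\theta_1+r_2^2\,d\theta_2$ is (twice) the standard Liouville primitive on $\C^2$, and therefore is invariant under the standard unitary representation of $U(2)$ on $\C^2\supset D^4_\varepsilon$. Acting trivially on $M$, this gives a homomorphism $\rho\co U(2)\to\Cont_0(M\times D^4_\varepsilon,\widetilde\alpha)$. I then single out the loop
\[
\phi_t\;:=\;\rho\bigl(\mathrm{diag}(e^{-it},e^{it})\bigr),\qquad t\in[0,2\pi].
\]
It closes up because $\phi_{2\pi}=\mathrm{id}$, and it preserves $M^+$ because rotating the angular variables leaves $(r_1,r_2)$ unchanged. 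Its infinitesimal generator on the $D^4$--factor is $X=-\partial_{\theta_1}+\partial_{\theta_2}$, so its contact Hamiltonian is
\[
H\;=\;\widetilde\alpha(X)\;=\;r_2^2-r_1^2,
\]
which is strictly positive on $M^+$ precisely because $0<r_1<r_2$ there. Hence $L_t:=\phi_t(L)$ is a positive loop of Legendrians based at $L$.

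For contractibility, the curve $t\mapsto\mathrm{diag}(e^{-it},e^{it})$ is a loop in $SU(2)\cong S^3$, which is simply connected; so I can choose any null--homotopy $H\co[0,1]\times[0,2\pi]\to SU(2)$ with $H(0,t)$ equal to this loop, $H(1,t)\equiv I$, and $H(s,0)=H(s,2\pi)=I$ for every $s$. Pushing $H$ forward through $\rho$ and evaluating on $L$ yields a contraction of $L_t$ in the space of Legendrian embeddings into $M\times D^4_\varepsilon$.

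The one point deserving a comment, rather than a real obstacle, is that the contracting family of loops is no longer positive and that the intermediate Legendrians $H(s,t)\cdot L$ typically leave $M^+$: a generic element of $SU(2)$ mixes the two complex coordinates and therefore violates $r_1<r_2$. This is harmless because ``contractible'' is intended in the space of \emph{all} Legendrian loops in $M\times D^4_\varepsilon$, with no positivity or $M^+$--confinement required on the null--homotopy; the hypothesis $0<r_1<r_2$ is used only at the very first step to make the initial loop positive.
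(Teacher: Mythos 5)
Your proposal is correct and follows essentially the same route as the paper: the loop generated by $\partial_{\theta_2}-\partial_{\theta_1}$ (Hamiltonian $r_2^2-r_1^2$, positive on $M^+$), realized as a circle in $SU(2)$ acting unitarily on the $D^4_\varepsilon$--factor, and contracted using $\pi_1(SU(2))=0$. Your closing remark that the null--homotopy need not be positive nor stay in $M^+$ is accurate and consistent with the paper's definition of contractibility.
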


This statement implies

\begin{corollary}\label{cor:1}
Let $n\geq 3$. Fix a loose Legendrian submanifold $L^n$ in a contact manifold $(M^{2n+1}, \xi)$. Assume that its bundle $T^{*}L\oplus\R$ has four independent sections. Then, $L$ admits a contractible positive loop of Legendrians.
\end{corollary}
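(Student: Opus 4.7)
The plan is to derive Corollary \ref{cor:1} from Theorem \ref{thm:R4} in direct analogy with the way Theorem \ref{thm:loose} is derived from Theorem \ref{thm:product}. The strategy is to use the topological hypotheses on $L$ to exhibit a Legendrian isotopic copy of $L$ that sits inside a contactly embedded product chart $M' \times D^4_{\varepsilon} \hookrightarrow M$, and moreover lies entirely in the region $M^+$. Once such a copy is found, Theorem \ref{thm:R4} immediately furnishes a contractible positive loop.

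The first step is to construct a \emph{formal} Legendrian embedding $\phi_{\mathrm{formal}}\co L \to M' \times D^4_{\varepsilon}$ whose image lies in $M^+$. By the Legendrian neighborhood theorem, a neighborhood of $L$ in $(M,\xi)$ is contactomorphic to a neighborhood of the zero section in $J^1(L) = T^*L \oplus \R$. Four independent sections of $T^*L \oplus \R$ give a trivial rank-$4$ subbundle of this jet bundle; the hypothesis $n \geq 3$ guarantees that the complementary quotient is a bundle of non-negative rank $n-3$. From this splitting one extracts a formal Legendrian embedding $L \hookrightarrow M' \times D^4_{\varepsilon}$, where $M'$ is a $(2n-3)$-dimensional contact manifold. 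A generic perturbation followed by a radial rescaling in the $(r_1, r_2)$ directions then places the image inside $M^+$.

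The second step invokes Murphy's h-principle for loose Legendrians \cite{Mu}: since $L$ is loose and $\phi_{\mathrm{formal}}$ is formally Legendrian isotopic to the inclusion $L \hookrightarrow M$, this h-principle converts $\phi_{\mathrm{formal}}$ into a genuine Legendrian isotopy ending at a Legendrian submanifold sitting inside $M^+ \subset M' \times D^4_{\varepsilon}$. Applying Theorem \ref{thm:R4} to this copy then produces the contractible positive loop. The main obstacle in this plan is ensuring that the formal embedding lands in $M^+$ rather than just in $M' \times D^4_{\varepsilon}$: the four-section hypothesis is precisely the obstruction-theoretic data needed to provide enough independent directions in the $D^4$ factor to avoid the codimension-$2$ loci $\{r_1 = 0\}$ and $\{r_2 = 0\}$, while the open condition $r_1 < r_2$ is then easy to enforce by a radial rescaling.
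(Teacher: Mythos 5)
Your overall route is the one the paper takes: use the four sections to get a product neighbourhood $U_L\cong N\times\R^4$ with $\dim N=2n-3\geq n$, push a formal copy of $L$ into the region $0<r_1<r_2$, upgrade to a genuine Legendrian isotopy via Murphy's $h$--principle and the looseness of $L$, and conclude with Theorem \ref{thm:R4}. The genuine gap is in your first step. Theorem \ref{thm:R4} needs an honest contact embedding of a product $(N\times\D^4_{\varepsilon},\ker(\alpha_N+r_1^2d\theta_1+r_2^2d\theta_2))$ into $M$, whereas the splitting $T^{*}L\oplus\R\cong\underline{\R}^4\oplus Q$ only produces a smooth product structure on the tubular neighbourhood, i.e.\ formal data. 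You assert without justification that $M'$ ``is a $(2n-3)$--dimensional contact manifold'' and that the chart is ``contactly embedded''; neither is automatic. The paper fills this in exactly as in the general case of Theorem \ref{thm:loose}: the $\R^4$--analogue of Lemma \ref{lem:top_1} provides a formal contact structure $(\xi_N,J_N)$ on $N$ whose stabilization represents the formal class of $\xi$; the $h$--principle for contact structures \cite[Thm.~10.3.2]{EM} (or Theorem \ref{thm:BEM} when $N$ is closed, which happens for $n=3$ since then $Q$ has rank zero) makes $\xi_N$ genuine; and the $h$--principle for isocontact embeddings \cite[Thm.~12.3.1]{EM} deforms the inclusion $N\hookrightarrow N\times\{0\}\subset U_L$ into a genuine contact embedding $\phi_1\co N\times\D^4_{\varepsilon}\to U_L$. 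Without these inputs there is no product contact chart to which Theorem \ref{thm:R4} can be applied.

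A secondary inaccuracy: the four--section hypothesis is not what lets you avoid the loci $\{r_1=0\}$ and $\{r_2=0\}$. Once the contact chart $\phi_1$ exists, $L$ sits inside $N\times\{0\}$, and a constant translation in the $\D^4$ factor to a point with $0<r_1<r_2$ already places the image inside $N^{+}$; this smooth isotopy is then covered by formal Legendrian data and fed to Theorem \ref{lem:Loose_Surj}, which must be applied twice (once to produce a genuine loose Legendrian inside $\phi_1(N^{+})$, once to connect it to the original loose $L$ inside $M$), exactly as the paper does. The four sections are used only to make the normal bundle of $L$ split off a trivial $\R^4$, that is, to make $U_L$ a product in the first place.
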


Again, the hypothesis can be easily checked. We cover, for instance, Legendrian spheres of dimension $n\geq 3$. Let us consider two more corollaries from Theorem \ref{thm:R4}.

\begin{corollary}\label{cor:2}
If $L\subset(\R^{2n+1}, \xi_{std})$ with $n\geq2$, then $L$ admits a contractible positive loop.
\end{corollary}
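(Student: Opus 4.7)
The plan is to reduce Corollary \ref{cor:2} directly to Theorem \ref{thm:R4}. First, I would identify $(\R^{2n+1}, \xi_{std})$ with $(M \times \R^4, \alpha_0 + r_1^2 d\theta_1 + r_2^2 d\theta_2)$, where $M = \R^{2n-3}$ carries its standard contact form $\alpha_0$ and $\R^4$ has polar coordinates $(r_1, \theta_1, r_2, \theta_2)$; this matches the setup of Theorem \ref{thm:R4} exactly. The case $n = 2$ degenerates the base to $M = \R$ with $\alpha_0 = dz$, but this causes no issue in the construction underlying Theorem \ref{thm:R4}.

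The substantive step is to produce a global contact isotopy $\phi_t$ of $(\R^{2n+1}, \xi_{std})$ such that $L' \defeq \phi_1(L)$ lies entirely inside $\{0 < r_1 < r_2 < \varepsilon\} \subset M \times D^4_\varepsilon$ for some $\varepsilon > 0$. This is a routine consequence of the flexibility of $\R^{2n+1}_{std}$: in Darboux coordinates, the contact dilation $(x_i, y_i, z) \mapsto (\lambda x_i, \lambda y_i, \lambda^2 z)$ shrinks $L$ to arbitrarily small diameter, after which a compactly supported contact isotopy translates the shrunken copy into any prescribed open subset, for instance a small neighborhood of the $\R^4$-point with $(r_1, r_2) = (1, 10)$. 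Equivalently, one observes that the region $\{0 < r_1 < r_2 < \varepsilon\}$ contains arbitrarily large Darboux balls, and any two Darboux balls of $\R^{2n+1}_{std}$ are contact-isotopic.

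Next I would apply Theorem \ref{thm:R4} to $L' \subset M^+ \cap (M \times D^4_\varepsilon)$, obtaining a contractible positive loop of Legendrians $\{L'_s\}_{s \in S^1}$ based at $L'$. Conjugating back, the family $L_s \defeq \phi_1^{-1}(L'_s)$ is a loop of Legendrians based at $L$. Since $\phi_1^{-1}$ is a contactomorphism of $(\R^{2n+1}, \xi_{std})$ connected to the identity, it preserves the co-orientation of $\xi_{std}$; hence it sends positive Legendrian isotopies to positive Legendrian isotopies, and transports any null-homotopy of $\{L'_s\}$ to a null-homotopy of $\{L_s\}$.

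The main (mild) potential obstacle is the verification that positivity and contractibility of loops of Legendrians are invariant under conjugation by a global contact isotopy. Both follow essentially from the naturality of the contact Hamiltonian: positivity is a pointwise condition on the Reeb component of the infinitesimal generator, which is preserved by the pushforward along any co-orientation preserving contactomorphism; and contractibility is preserved because $\phi_1^{-1}$ induces a bijection on the relevant Legendrian isotopy spaces mapping based loops to based loops and homotopies to homotopies. Once these standard facts are in hand, the argument concludes.
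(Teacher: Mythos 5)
Your proposal is correct and follows essentially the same route as the paper: identify $(\R^{2n+1},\xi_{std})$ with $(\R^{2n-3}\times\R^4,\ker(\alpha_{std}+r_1^2d\theta_1+r_2^2d\theta_2))$, displace the compact Legendrian into the region $\{r_1<r_2\}$ by a contact isotopy, apply Theorem~\ref{thm:R4}, and conjugate back. The only (immaterial) difference is the displacement mechanism: the paper translates the second disk factor $\D^2_R$ to $\D^2_R(10R,0)$ via a refinement of Lemma~\ref{lem:diff}, whereas you shrink $L$ by a contact dilation and move it between Darboux balls; both are routine and valid.
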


Realize that this statement can be proven by using that $\SSS^{2n+1}$ admits a contractible positive loop \cite{EKP}, placing $\R^{2n+1}\subset\SSS^{2n+1}$ and making sure that the restrictions of the contact isotopies to the Legendrian submanifold do not cross $\infty\in\SSS^{2n+1}$. This can be done by a genericity argument whenever $n\geq 2$. However, the proof presented in this note is more elementary.

\begin{corollary}\label{cor:3}
Let $\R^{2n+1}$ be the Euclidean space equipped with the overtwisted at infinity contact structure $\xi$. If $L\subset(\R^{2n+1}, \xi)$ and $n> 2$ then $L$ admits a contractible positive loop.
\end{corollary}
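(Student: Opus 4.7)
The plan is to reduce to Theorem \ref{thm:R4} by presenting $(\R^{2n+1},\xi)$ as a product $(M\times\R^4,\alpha_M+r_1^2d\theta_1+r_2^2d\theta_2)$ of contact manifolds and isotoping $L$ into the region $M^+=M\times\{0<r_1<r_2\}$.

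Since $n>2$, we have $2n-3\geq 3$, so we may equip $M\defeq\R^{2n-3}$ with a contact form $\alpha_M$ whose kernel is overtwisted at infinity (using the existence part of Borman--Eliashberg--Murphy). The product $(M\times\R^4,\alpha_M+r_1^2d\theta_1+r_2^2d\theta_2)$ is a contact structure on $\R^{2n+1}$ which is again overtwisted at infinity, since any overtwisted disk of $(M,\ker\alpha_M)$ placed at infinity persists in the product. By the Borman--Eliashberg--Murphy classification, overtwisted-at-infinity contact structures on $\R^{2n+1}$ in the same formal class are contactomorphic rel infinity, so after matching the formal classes we may identify $(\R^{2n+1},\xi)$ with this product model.

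Under the identification, $L$ is a loose Legendrian (its complement is overtwisted at infinity and $n\geq 3$). The open set $M^+$ is nonempty, so we can smoothly push $L$ into a Darboux chart inside $M^+$ and transport the Lagrangian bundle data, producing a formal Legendrian isotopy from $L$ to a loose Legendrian $L'\subset M^+$. Murphy's h-principle upgrades this to a genuine Legendrian isotopy, which extends to an ambient contact isotopy $\phi_t$ with $\phi_1(L)=L'$. Since $L'$ is compact, $L'\subset M\times D^4_\varepsilon$ for some $\varepsilon>0$, and Theorem \ref{thm:R4} produces a contractible positive loop of Legendrians based at $L'$. Conjugating by $\phi_t$ gives the desired contractible positive loop based at $L$.

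Two steps are technically delicate: invoking the Borman--Eliashberg--Murphy classification in the non-compact setting (where the formal classes at infinity must be compared) and verifying that the smooth push of $L$ into $M^+$ lifts to a formal Legendrian isotopy. Both should follow from standard h-principle machinery. The hypothesis $n>2$ enters precisely in the product-model step, as we need $\dim M=2n-3\geq 3$ for $M$ to carry a contact structure at all.
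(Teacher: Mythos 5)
Your proposal is correct and follows essentially the same route as the paper: write $(\R^{2n+1},\xi)$ as $(\R^{2n-3}\times\R^4,\ker(\widetilde{\alpha_{ot}}+r_1^2d\theta_1+r_2^2d\theta_2))$ with an overtwisted factor, identify it with $\xi$ via the uniqueness of overtwisted-at-infinity structures (the paper's Lemma \ref{lem:swindling} rather than a ``rel infinity'' version of \cite{BEM}), observe $L$ is loose, use Theorem \ref{lem:Loose_Surj} to isotope $L$ into $(\R^{2n-3})^+$, and apply Theorem \ref{thm:R4}. The two steps you flag as delicate are exactly the ones the paper handles via Lemma \ref{lem:swindling} and Proposition \ref{prop:OTInfty}.
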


For the precise notion of overtwisted at infinity see Definition \ref{def:OTatInfty}.

\subsection{Acknowledgements}

We are grateful to R. Casals, V. Colin, V. Ginzburg, G. Liu and A. del Pino, for useful discussions. The first author is grateful to the Marie Curie Research programme ``Indo European collaboration on moduli spaces'' that allowed him to visit ICMAT during the development of this project. The first author is also thankful to ICTP, Trieste (Italy), where part of this work was carried out. Second and third authors are supported by the Spanish National Research Projects SEV-2015-0554 and MTM2016-79400-P.

\section{Preliminaries}\label{preliminares}

Consider a ($2n+1$)-dimensional manifold $M$ endowed with a contact structure $\xi$. An embedding  of an $n$-dimensional manifold $\phi:L^n\hookrightarrow M^{2n+1}$ is called \textit{Legendrian}\footnote{We will work along the paper with parametrized Legendrians. This is done in order to ease the notation. } if its differential $D\phi:TL\rightarrow\phi^{*}(TM)$ satisfies that $D\phi(TL)\subset\phi^{*}\xi$. 

A vector field $X$ on $M$ is called a \textit{contact vector field} if its flow preserves the contact structure $\xi=\ker(\alpha)$. That is, $L_{X} \alpha = g \alpha.$ Fixing a contact form $\alpha$ there exists a bijection between the space of contact vector fields and the space of smooth functions. Given a contact vector field $X$, the function $H\defeq\alpha(X)\in\mathcal{C}^{\infty}(M)$ which satisfies the equations:

\begin{enumerate}
  \item $i_X \alpha = H$
  \item $ i_{X}  d \alpha = (d_R H) \alpha  - d H$
 \end{enumerate}

is called the \textit{associated Hamiltonian}. Conversely, given a function $H\in\mathcal{C}^{\infty}(M)$ there exists a unique contact vector field $X_H$ verifying the equations above.

A diffeomorphism $\psi$ of $(M,\xi)$ is a \textit{contactomorphism} if $\psi_*(\xi) = \xi$ or, equivalently, $\psi^* \alpha = g \alpha$ for some everywhere positive function $g$ on $M$. An \textit{isotopy of contactomorphisms} is a smooth diffeotopy $\psi_t: M \rightarrow M$ generated by a $1$--parametric family of contact vector fields $X_t$, with $t\in[0,1]$. We say that the isotopy is a \textit{loop of contactomorphisms} if $\psi_0=\psi_1=Id$.

Let us remark that the above bijection implies that the isotopy is completely characterized by a $1$--parametric family of Hamiltonians $H_t: M \rightarrow \mathbb{R}$. Hence, we can make the following definition:

\begin{definition}
An isotopy of contactomorphisms $\psi_t$ is \textit{non-negative} if its associated family of Hamiltonians $H_t$ is non-negative, i.e. $H_t(p) \geq 0$ for all $p$ in $M$ and for all $t$ in $[0,1]$. If the inequality is strict, the isotopy is called \textit{positive}. Analogously we can define\textit{ positive} and \textit{ non-negative loops of contactomorphisms}.
\end{definition}

Let us point out that when we have a loop of contactomorphisms we can choose the parameter to be defined as $t\in\SSS^1$ i.e. the Hamiltonian can be chosen to satisfy $H_t(p)=H_{t+1}(p)$. The above definitions can be adapted to this situation.

\begin{definition}
An \textit{isotopy of Legendrian submanifolds} is a smooth $1$--parametric family $\phi_t :L\rightarrow M$ of Legendrian embeddings with $t \in I=[0,1]$.  That is, a smooth map $\phi: L \times I \rightarrow M$ such that $\left.\phi\right|_{L \times \{t\}}$ is a Legendrian embedding for all $t$. By a \textit{\textit{loop of Legendrians based at}} $\Lambda$ we mean an isotopy of Legendrians such that $\phi_0(L) = \phi_1(L)=\Lambda$ as submanifolds of $(M, \xi)$.

\end{definition}

A basic fact about Legendrian isotopies is the next Legendrian isotopy extension theorem:

\begin{theorem}[see, e.g., {\cite[Thm. 2.6.2]{Ge}}]
Let $\phi_t$ be a given isotopy of a closed Legendrian, then we can extend the isotopy by a family $\psi_t$ of contactomorphisms satisfying $\psi_t\circ\phi_0 = \phi_t$ and $\psi_0 = Id$ .
\end{theorem}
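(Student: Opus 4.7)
The plan is to integrate a time-dependent contact vector field $X_t$ on $M$ whose restriction to $\phi_t(L)$ equals the velocity of the Legendrian isotopy. Since the map $X \mapsto \alpha(X)$ is a bijection between contact vector fields and smooth functions on $M$, this reduces to constructing a smooth family of Hamiltonians $H_t \in \mathcal{C}^{\infty}(M)$ and then invoking the Legendrian isotopy extension at the infinitesimal level.

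First I would set $V_t(\phi_t(x)) \defeq \frac{d}{dt}\phi_t(x)$, a section of $TM$ along $\phi_t(L)$ that in general need not lie in $\xi$. Any contact vector field $X$ with $X|_{\phi_t(L)} = V_t$ necessarily satisfies $\alpha(X) = \alpha(V_t)$ on $\phi_t(L)$, which forces the restriction of $H_t$ to $\phi_t(L)$ to equal $h_t \defeq \alpha(V_t)$. However, to actually match $V_t$ itself, one needs to prescribe more than just the boundary value of $H_t$: the first-order behavior of $H_t$ in the directions normal to $\phi_t(L)$ also matters, so the extension must be done carefully.

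To carry out the extension I would use a Weinstein-type tubular neighborhood, identifying a neighborhood of $\phi_t(L)$ in $M$ with a neighborhood of the zero section in $T^{*}L \times \R_z$ equipped with the standard contact form $dz - p\,dq$. A direct calculation in these coordinates shows that the contact vector field $X_H$ associated with a Hamiltonian $H$, restricted to the zero section, has Reeb ($\partial_z$) component equal to $H$, tangential ($\partial_q$) component equal to $-\partial_p H$, and fiber ($\partial_p$) component equal to $\partial_q H$. Prescribing $H_t$ to equal $h_t$ along the zero section and choosing the fiber derivatives of $H_t$ there so as to match the tangential part of $V_t$ therefore pins down the Reeb and tangential components of $X_{H_t}|_{\phi_t(L)}$. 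The fiber component of $X_{H_t}|_{\phi_t(L)}$ is then fully determined by $h_t$, and agrees with the fiber component of $V_t$ thanks to the infinitesimal Legendrian condition: differentiating $\phi_t^{*}\alpha \equiv 0$ in $t$ and applying Cartan's formula yields $\phi_t^{*}\left(dh_t + i_{V_t}d\alpha\right) = 0$, which is precisely the equation that forces this agreement.

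Finally, I would multiply $H_t$ by a $t$-smooth cut-off function supported in the Weinstein neighborhood and extend it by zero, obtaining a global family $H_t \in \mathcal{C}^{\infty}(M)$; compactness of $L$ guarantees completeness of the flow of $X_{H_t}$, yielding the desired contact isotopy $\psi_t$ with $\psi_0 = Id$ and $\psi_t \circ \phi_0 = \phi_t$ by uniqueness of ODEs. The main obstacle is the compatibility step in the third paragraph: that after matching the Reeb and tangential components of $V_t$ and $X_{H_t}$, the fiber components agree \emph{for free}. This is the place where the hypothesis that $\phi_t$ is a Legendrian (rather than merely smooth) isotopy enters essentially, and the rest of the argument is bookkeeping.
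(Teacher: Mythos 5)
Your proof is correct; the paper itself gives no argument here but simply cites \cite[Thm.\ 2.6.2]{Ge}, and your construction (prescribing the $1$-jet of $H_t$ along $\phi_t(L)$ in Weinstein coordinates and checking compatibility via $\phi_t^{*}\bigl(dh_t + i_{V_t}d\alpha\bigr)=0$, then cutting off and integrating) is essentially the standard proof found in that reference. The only point worth a word is that the Weinstein identification must be chosen smoothly in $t$ (or on finitely many subintervals of $[0,1]$), which is routine.
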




We are ready to introduce the concept of positive isotopy of Legendrians.

\begin{definition}
 Let us consider the contact structure $\xi$ induced by a contact form $\alpha$. An isotopy $\phi_t$ of Legendrians is called \textit{non-negative} (resp. \textit{positive}) if $\alpha\left(\frac{\partial\phi_t}{\partial t}(p)\right)\geq0 \text{ (resp. } \alpha \left(\frac{\partial\phi_t}{\partial t}(p)\right)>0)$ for all $p\in L$ and for all $t$.
\end{definition}

Clearly, these notions are independent of the chosen parametrization. This is due to the fact that, given a different parametrization $\widetilde{\phi}:L\times I\rightarrow M$, the difference of the vector fields $\frac{\partial\phi_t}{\partial t}$ and $\frac{\partial\widetilde{\phi}_t}{\partial t}$ lies in the tangent space to the Legendrian submanifold $\phi_t(L)$.

According to the previous definition, a loop of Legendrians is called \textit{non-negative} (resp. \textit{positive}) provided the isotopy generating the loop is non-negative (resp. positive). Notice that to have a positive loop of Legendrians is much weaker than to have a positive loop of contactomorphisms. Any extension of a positive Legendrian loop needs be neither a loop of contactomorphisms nor positive. However, we can easily arrange the extension of a positive (resp. non-negative) loop of Legendrians to be positive (resp. non negative). This fact will be used afterwards.

\begin{definition}
A loop of Legendrians $\phi_t$ is contractible if there exists a homotopy of loops of Legendrians $\phi_{t,s}$ such that $\phi_{t,1}=\phi_t$, $\phi_{t,0}=\phi_{0,1}$ and $\phi_{0,s}=\phi_{0,1}$.
\end{definition}

\begin{remark}
The existence of a positive loop of a specific Legendrian $L$ implies that the space $\Leg (L)$ is not orderable. Equivalently, the existence of contractible positive loop of a specific Legendrian $L$ implies that the space $\widetilde{\Leg} (L)$ is not orderable.
\end{remark}

\subsection{Operations with loops of Legendrians}
\mbox{}\\
We can define three important operations on the space of loops of Legendrians: concatenation, composition and conjugation.

\begin{description}
  \item[2.1.1] \textbf{\underline{Concatenation}:}
  Let $\{\phi^1_t,\cdots, \phi^k_t\}$ be $k$ loops of Legendrians with fixed base point $L \subset M$. Note that the reparametrization of the loops given by

  \begin{eqnarray*}
  \widetilde{\phi}^1_t&=&\phi^1_t,\\
  \widetilde{\phi}^2_t&=&\phi^2_t\circ(\phi^2_0)^{-1}\circ\phi^1_1,\\ \widetilde{\phi}^3_t&=&\phi^3_t\circ(\phi^3_0)^{-1}\circ\phi^2_1\circ(\phi^2_0)^{-1}\circ\phi^1_1,\\
   \vdots\hspace{1.4mm} &   \vdots & \hspace{2.5cm}\vdots \\
  \widetilde{\phi}^k_t&=&\phi^k_t\circ(\phi^k_0)^{-1}\circ\phi^{k-1}_1\circ(\phi^{k-1}_0)^{-1}\circ\dots\circ\phi_1^1
  \end{eqnarray*}

  satisfies $\widetilde{\phi}_1^j=\widetilde{\phi}_0^{j+1}$ for $1\leq j\leq k-1$. Thus, we assume this property in the family of loops without loss of generality.
  Consider their associated Hamiltonians $\{H^1_t,\cdots, H^k_t\}$. The \textit{concatenation} operation $\phi^1_t  \circledcirc \cdots \circledcirc \phi^k_t$ is defined in the loop space of $\Leg(L)$ as the usual concatenation of loops:


 \[  \phi^1_t \circledcirc \cdots \circledcirc \phi^k_t   =
  \left\{ \begin{array}{ll}
  \phi^1_{kt}  &       \mbox{if $t \in [0,\frac{1}{k}]$};\\
  \phi^2_{kt-1} &  \mbox{if $t \in [\frac{1}{k}, \frac{2}{k}]$}; \\
  \hspace{3mm}\vdots & \hspace{4mm}\vdots \\
  \phi^{k-1}_{kt- k +2}  &   \mbox{if $t \in [1 - \frac{2}{k}, 1 - \frac{1}{k}]  $} \\
  \phi^{k}_{kt -k +1} &   \mbox{if $t \in [1 - \frac{1}{k}, 1]$}
  \end{array} \right.. \]

Fix extensions $\{\psi^1_t,\cdots, \psi^k_t\}$ and associated Hamiltonians $\{H^1_t,\cdots, H^k_t\}$, then the generating Hamiltonian of the concatenation is $H(\phi^1_t \circledcirc \cdot \circledcirc \phi^k_t)$ is given by

 \[ H(\phi^1_t \circledcirc \cdots \circledcirc \phi^k_t)   = \left\{ \begin{array}{ll}
  k H^1(\cdot,kt)  &       \mbox{if $t \in [0,\frac{1}{k}]$};\\
  k H^2(\cdot,kt-1) &  \mbox{if $t \in [\frac{1}{k}, \frac{2}{k}]$} \\
 \hspace{8mm}\vdots & \hspace{5mm}\vdots \\
  k H^{k-1}(\cdot,kt- k +2) &   \mbox{if $t \in [1 - \frac{2}{k}, 1 - \frac{1}{k}]  $} \\
   k H^{k}(\cdot,kt -k +1) &   \mbox{if $t \in [1 - \frac{1}{k}, 1]$}
   \end{array} \right. \]

  \item[2.1.2] \textbf{\underline{Composition}:}

  Let $\psi_t^1$ and $\psi_t^2$ be two extensions of two loops of Legendrians with common base point $L$ embedded in $M$ and let $H_t^1$ and $H_t^2$ be their associated Hamiltonians. Realize that the composition of the loops $\psi_t^1\circ \psi_t^2$ defines a loop of Legendrians given by $\phi_t=\left.(\psi_t^1\circ \psi_t^2)\right._{|\psi^1_0(L)}$. In addition, if ${\psi_t^1}^* \alpha = e^{f_t} \alpha$ then the associated Hamiltonian $H(\psi_t^1\circ \psi_t^2)$ for the composition is given by

 $$ H(\psi_t^1\circ \psi_t^2)(p,t) = H_t^1(p,t) + e^{-f_{t}} H_t^2(\psi_t^{-1}(p),t).  $$

  Let us remark that this operation depends on the choice of extensions and is not canonically defined in the loop space of $\Leg(L)$.

  \item[2.1.3] \textbf{\underline{Conjugation}:}

  Finally, let $\phi_t$ be a loop of Legendrians based at $L$ and let $\Psi$ be a contactomorphism, then $\Psi\circ\phi_t$ is a loop of Legendrians of $\Psi(\phi_0(L))$. Now, consider the extension $\psi_t$ of $\phi_t$ with the associated Hamiltonian $H_t$. If ${\Psi}^* \alpha = e^{f} \alpha$, then the contact isotopy $\Psi \circ \psi_t \circ \Psi^{-1}$ is an extension of the loop of Legendrians $\Psi\circ\phi_t$ and is generated by the Hamiltonian

 $$ H(\Psi \circ \phi_t \circ \Psi^{-1}) (p,t) = e^{-f}H_t(\Psi^{-1}(p),t).  $$

Let us remark that if $\Psi$ preserves $\phi_0(L)$, then the conjugated loop is still a loop based at $\phi_0(L)$. Also, the conjugation of a positive (resp. non-negative) loop is positive (resp. non-negative). This shows that the property of having a positive loop is independent of the chosen Legendrian within the isotopy class of Legendrians.

\end{description}

\subsection{Formal Legendrians and formal contact structures}

Now denote by $\Cont(M)$ the \textit{space of co-oriented contact structures} in $M$ and consider the set $\DCont(M)=\{(\xi, \alpha,J:\xi\rightarrow\xi)\}$ where $\xi\in\Cont(M)$, $\alpha$ is an associated contact form and $J$ is an almost-complex structure compatible with $(\xi, d\alpha)$. This set is known as the \textit{space of decorated contact structures}. Notice that the forgetful map $f:\DCont(M)\rightarrow\Cont(M)$ has contractible fibers. Therefore it induces a homotopy equivalence and thus it has a homotopy inverse $\iota:\Cont(M)\rightarrow\DCont(M)$.

Finally, we define the \textit{space of formal contact structures} of $M$ as the set of pairs $\FCont(M)=\{(\xi, J)\}$, where $\xi$ is a co-oriented distribution and $J:\xi\to\xi$ is an almost-complex structure. Two contact structures $\xi_1$ and $\xi_2$ are \textit{formally equivalent} if there exists a family of formal contact structures $\{(\xi_t, J_t)\}$ that connects them. Composing $\iota$ with the projection map $\pi:\DCont\to\FCont$, we get a natural map

\[j:\Cont(N)\hookrightarrow\FCont(N).\]

There is a natural inclusion $i$ given by:

\begin{align}\label{form:natincl}
i:\FCont(N)&\hookrightarrow\FCont(N\times\R^2)\\
(\xi, J)&\mapsto\left(\xi\oplus\R^2,
\left(
\begin{array}{c|c}
J & 0 \\ \hline
0 & i
\end{array}
\right)\right)\nonumber.
\end{align}

\begin{lemma}\label{lem:top_1}
 If $N$ is an open manifold, then the inclusion map (\ref{form:natincl}) induces an isomorphism
\[i_{0}: \pi_0(\FCont(N)) \rightarrow \pi_0(\FCont(N \times \mathbb{R}^2)).\]
\end{lemma}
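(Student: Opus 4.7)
The approach is to reinterpret both $\FCont(N)$ and $\FCont(N\times\R^2)$ as spaces of sections of a fiber bundle over $N$, and then invoke obstruction theory, using that an open manifold of dimension $2n+1$ has the homotopy type of a CW complex of dimension at most $2n$.

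First, I would identify $\FCont(M)$ (for $M$ of dimension $2k+1$) with the space of sections of a fiber bundle $P_M \to M$ with fiber $F_k := SO(2k+1)/U(k)$; this is the standard description of almost contact structures as reductions of the structure group of $TM$ from $GL(2k+1,\R)$ to $U(k) \times \R_{>0}$ (after passing to maximal compacts). For $M = N \times \R^2$, since $T(N \times \R^2) = TN \oplus \underline{\R^2}$ and $\R^2$ is contractible, restriction to $N \times \{0\}$ induces a homotopy equivalence $\FCont(N \times \R^2) \simeq \Gamma(N; Q)$ for a bundle $Q \to N$ with fiber $F_{n+1}$. Under these identifications, the map $i$ of (\ref{form:natincl}) corresponds to the map on sections induced by the fiberwise stabilization inclusion $F_n \hookrightarrow F_{n+1}$, $(\xi, J) \mapsto (\xi \oplus \R^2, J \oplus i)$.

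Second, I would show that the fiber inclusion $F_n \hookrightarrow F_{n+1}$ is $(2n+1)$-connected. Comparing the principal fibrations
\[ U(k) \to SO(2k+1) \to F_k \quad\text{for } k = n, n+1 \]
via the induced ladder of long exact sequences, this reduces to the known connectivities of the stabilization maps $U(n) \hookrightarrow U(n+1)$ and $SO(2n+1) \hookrightarrow SO(2n+3)$, whose homotopy fibers are $S^{2n+1}$ and the Stiefel manifold $V_2(\R^{2n+3})$ (both $2n$-connected). The critical degree $k = 2n$ is closed off by the five-lemma combined with Bott-periodicity-style stability (e.g.\ $\pi_{2n}(U(n+1)) = 0$ since we are already in the stable range for $U$).

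Finally, since $N$ is open of dimension $2n+1$, a proper Morse function on $N$ may be arranged without top-index handles, so $N$ has a CW model of dimension at most $2n$. Standard obstruction theory then yields that a $(2n+1)$-connected fiber inclusion induces a bijection on $\pi_0$ of sections over a base of CW dimension at most $2n$: the obstructions to lifting a section of $Q$ to one of $P_N$ live in $H^{k+1}(N; \pi_k(\mathrm{hofib}(F_n \hookrightarrow F_{n+1})))$ and vanish for $k \leq 2n$, while the analogous obstructions for lifting a homotopy of sections over $N \times [0,1]$ also vanish in range, giving injectivity. The main technical obstacle is the unstable connectivity computation for $F_n \hookrightarrow F_{n+1}$ in degree $2n$, which lies just at the boundary of the stable range of the classical groups and requires a careful diagram chase rather than a black-box Bott-periodicity input.
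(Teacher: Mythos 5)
Your overall architecture coincides with the paper's proof: identify formal contact structures with sections of the associated bundles with fibers $SO(2k+1)/U(k)$, use that an open manifold has a handle (hence CW) decomposition with no top-dimensional cells, and conclude by obstruction theory from the connectivity of the fiber inclusion. The paper simply cites Geiges for the connectivity statement; the one place you genuinely depart from it is your proposed derivation of that connectivity from the ladder of fibrations $U(k)\to SO(2k+1)\to F_k$, and that is exactly where there is a gap. At the critical degree $k=2n$ the five lemma requires $\pi_{2n}(SO(2n+1))\to\pi_{2n}(SO(2n+3))$ to be \emph{injective}, and this fails in general: by the exact sequence of $SO(2n+1)\to SO(2n+2)\to S^{2n+1}$ its kernel is generated by $\partial\iota_{2n+1}$, the clutching element of $TS^{2n+1}$, which is nonzero whenever $S^{2n+1}$ is not parallelizable, i.e.\ for all $2n+1\notin\{1,3,7\}$. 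The observation that $\pi_{2n}(U(n+1))=0$ does not repair this; it only gives injectivity of $\pi_{2n}(SO(2n+3))\to\pi_{2n}(F_{n+1})$, after which the chase still needs the offending kernel to lie in the image of $\pi_{2n}(U(n))\to\pi_{2n}(SO(2n+1))$. That last fact is true --- $TS^{2n+1}$ splits as a trivial line bundle plus a complex $n$--plane bundle, so its clutching element lifts to $U(n)$ --- but it is an extra geometric input that your sketch neither states nor uses, and without it the argument does not close.

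The clean route, and essentially the content of the reference the paper invokes, is to avoid the $U\subset SO$ ladder altogether: use the diffeomorphism $SO(2k+2)/U(k+1)\cong SO(2k+1)/U(k)$ to rewrite $F_n\cong SO(2n+2)/U(n+1)$, so that the stabilization $F_n\hookrightarrow F_{n+1}$ becomes the fiber inclusion of the fibration
\[
SO(2n+2)/U(n+1)\longrightarrow SO(2n+3)/U(n+1)\longrightarrow S^{2n+2},
\]
whose long exact sequence immediately gives isomorphisms on $\pi_k$ for $k\le 2n$ and a surjection on $\pi_{2n+1}$, i.e.\ the $(2n+1)$--connectivity you need. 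With that substitution (or with the almost contact splitting of $TS^{2n+1}$ added to your diagram chase) the rest of your argument --- the reduction to section spaces over $N$, the CW model of dimension $\le 2n$, and the obstruction-theoretic conclusion --- is correct and matches the paper.
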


\begin{proof}
Notice that a formal contact structure on $N$ is a reduction of the structure group to $1 \times U(n-1)$. Hence, considering a formal contact structure on $N$ is equivalent to having a section of the associated bundle $SO(2n-1)/U(n-1)$. Analogously, having a formal contact structure on $N \times \mathbb{R}^2$ is equivalent to choosing a section of the associated $(SO(2n+1)/U(n))$--bundle.

 The homotopy groups $\pi_k$ of the spaces $SO(2n-1)/U(n-1)$ and $SO(2n+1)/U(n)$ are isomorphic whenever $k<2n-1$ \cite[Lemma 8.1.2]{Ge}. Observe that, as $N$ is an open manifold, we are able to get a CW-decomposition of $N$ with no cells of dimension $2n-1$. Hence $i_{*}$ is an isomorphism.
\end{proof}

Let us remark that if $N$ is closed, the same argument provides the surjectivity of $i_{0}$.

\subsection{Looseness and Overtwistedness}

\begin{definition}\label{defn:OT3}
A contact structure $\xi$ on $M^3$ is called \textit{overtwisted} if there exists an embedded $2$--disk $\D^2\subset M$ such that $\partial\D^2\sqcup\{0\}$ is tangent to the contact distribution while the rest of the disk is transverse to $\xi$. If $\xi$ is not overtwisted, it is called \textit{tight}.
\end{definition}

We define the {\em standard overtwisted contact form} in $\R^3(z,r,\theta)$ to be $\alpha_{ot}=cos(r)dz+r\cdot sin(r)d\theta$. It is overtwisted since the embedding $e:\D_{\pi}^2\hookrightarrow\R^3$, $e(r,\theta)=(0,r,\theta)$ is overtwisted.

Overtwisted contact structures are important because they form a subclass of $\Cont(M)$ satisfying a complete $h$--principle \cite{El1, BEM} in all dimensions. Definition \ref{defn:OT3} gives us the notion of an overtwisted contact structure in dimension $3$. Let us generalize this concept.

There exists a sequence of positive constants $R(n)$ in $\R^+$, whose value is computed in \cite{CMP} that provide the following

\begin{definition}
Let $(M,\xi)$ be a contact manifold of dimension ${2n+1}>3$. $(M,\xi)$ is called \textit{overtwisted} if there exists a contact embedding $\phi_{ot}:(\B^{3}_{2\pi}\times \B_{R(n)}^{2n-2}, \ker(\alpha_{ot}+\lambda_{std})) \hookrightarrow (M,\xi)$, where $\lambda_{std}$ is the standard Liouville form on $\B_{R}^{2n-2}$. Otherwise, it is called \textit{tight}.
\end{definition}

We say that a formal contact structure is overtwisted if it is genuine in some open set $B$ and is overtwisted on $B$.

Fix a closed set $A\subset M$ and a contact structure $\xi_A$ on a germ of neighborhood of $A$. Denote by $\Cont_{ot}(M,A, \xi_A)$ the space of contact structures that are overtwisted on $M \setminus A$ and coincide with $\xi_A$ on a small neighborhood of $A$. Equivalently, define $\FCont_{ot}(M,A,\xi_A)$ to be the space of overtwisted formal contact structures that agree with $\xi_A$ on $U_A$. Finally, denote by $j$ the inclusion map $j:\Cont_{ot}(M,A,\xi_A)\to\FCont_{ot}(M,A,\xi_A)$.

\begin{theorem}[{\cite[Thm. 1.2]{BEM}}]\label{thm:BEM}
If $M\setminus A$ is connected, then the inclusion map $j$ induces an isomorphism

\[j_{0}: \pi_0(\Cont_{ot}(M,A,\xi_A)) \rightarrow \pi_0(\FCont_{ot}(M,A,\xi_A)).\]

\end{theorem}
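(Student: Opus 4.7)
The plan is to establish this as a parametric h-principle of Gromov-type, proceeding via a handle-by-handle extension argument on $M\setminus A$ together with a relative extension lemma that exploits the overtwisted ball as a flexibility germ. Concretely, I would fix a triangulation (or handle decomposition) of $M\setminus A$ compatible with the overtwisted ball $B$ already present by hypothesis, and inductively promote a formal contact structure to a genuine one on the codimension-$k$ skeleton, with the overtwisted ball $B$ playing the role of a universal ``source of flexibility'' that is carried along the induction.

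First I would prove the \emph{extension lemma}: given any formal contact structure on a neighborhood of the standard overtwisted ball $\B^{3}_{2\pi}\times\B^{2n-2}_{R(n)}$ which is genuine on the ball itself, one can homotope the formal structure, rel the ball, to a genuine overtwisted structure. This is the heart of the argument and I would isolate it as the key \emph{contact model statement}: the half-disk model of the overtwisted ball has enough ``wiggle room'' (captured by the constant $R(n)$ of Casals--Murphy--Presas) that any formal deformation of the contact hyperplane field and compatible $J$ can be integrated to a genuine one. Technically this is proved by writing the formal data as a graph over the standard model and using convex-integration / holonomic approximation in a neighborhood while keeping the overtwisted disk itself fixed.

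With this local lemma in hand, surjectivity of $j_0$ is proved by induction on handles of $M\setminus A$. On handles of index $<2n+1$ the formal contact condition is an open differential relation satisfying the usual h-principle on an open manifold (as in Gromov--Eliashberg), so the formal structure can be made genuine away from the top cells. For top-dimensional cells one invokes the extension lemma: before filling in each top handle, one isotopes the overtwisted ball $B$ (which exists by overtwistedness) into the handle via a contact ambient isotopy, and then applies the extension lemma to promote the formal structure inside the handle to a genuine one. Injectivity (which is what $\pi_0$-isomorphism requires beyond surjectivity) follows by running the same argument parametrically in $t\in[0,1]$: a formal path between two genuine overtwisted structures is, by the same handle-induction and extension lemma applied to $M\times[0,1]$ rel $M\times\{0,1\}$, homotopic to a genuine path.

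The main obstacle is precisely the extension lemma, because it is the step where one must genuinely use the geometry of the overtwisted ball rather than merely soft/topological information: one has to produce, parametrically in the formal data, an actual contact form. All the other steps are either standard h-principle machinery on open manifolds or bookkeeping of the handle induction. The hypothesis that $M\setminus A$ be connected enters to guarantee that the single overtwisted ball furnished by the hypothesis can be ambiently isotoped into every handle of $M\setminus A$, so that the extension lemma applies at every stage of the induction.
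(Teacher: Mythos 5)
This statement is not proved in the paper: it is quoted verbatim from Borman--Eliashberg--Murphy \cite{BEM} (their Theorem 1.2), so there is no internal proof to compare your proposal against. Judged against the actual argument of \cite{BEM}, your outline does capture the correct global architecture --- reduce to an extension problem over the top-dimensional cells using the Gromov $h$--principle for contact structures on open manifolds, treat the overtwisted ball as a universal source of flexibility, use connectedness of $M\setminus A$ to make that ball accessible from every top cell, and obtain injectivity of $j_{0}$ by running the same scheme relatively over $M\times[0,1]$. Up to that point the proposal is a fair summary of the known strategy.

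The gap is in the step you yourself identify as the heart of the matter, the extension lemma, and it is conceptual rather than technical. First, you cannot ``isotope the overtwisted ball $B$ into the handle via a contact ambient isotopy'' before filling the handle: at that stage the structure on the top handle is only formal, so there is no contact structure there within which to isotope anything; the argument of \cite{BEM} instead joins all the top-dimensional holes into a single hole by tunnels and places that hole \emph{adjacent} to the overtwisted ball, never inside the region still to be filled. Second, and more seriously, the extension lemma cannot be proved by ``convex integration / holonomic approximation in a neighborhood'': holonomic approximation applies only near positive-codimension polyhedra (equivalently, on open manifolds), which is exactly the part of the induction that is already standard, and the contact relation is not ample in the sense required for convex integration over a top-dimensional cell --- if it were, the $h$--principle would hold with no reference to overtwistedness at all. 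The actual content of \cite{BEM} is precisely the replacement for this step: an encoding of germs on the boundary sphere of a hole by ``contact shells,'' a reduction of an arbitrary shell to a circular model, and a proof that the standard overtwisted ball dominates, and hence can absorb, every circular model shell. Without that machinery, or a genuine substitute for it, the proposal does not constitute a proof.
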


In particular, on any closed manifold $M$, any almost contact structure is homotopic to an overtwisted contact structure which is unique up to isotopy.


For the open case we have
\begin{definition}\label{def:OTatInfty}
The contact manifold $(M,\xi)$ is called \textit{overtwisted at infinity} if for any compact subset $K\subset M$, each noncompact connected component of the contact manifold $(M\setminus K,\xi)$ is overtwisted. 
\end{definition}

Eliashberg \cite{El2} proved that two contact structures on $\R^3_{ot}$ at infinity are contactomorphic. This result can be extended, without changes in the argument, to general open manifolds of arbitrary dimension. Concretely,


\begin{lemma}\label{lem:swindling}
Let $M$ be an open manifold and let $(M,\xi_0)$ and $(M,\xi_1)$ be two contact structures overtwisted at infinity such that $\xi_0$ and $\xi_1$ are formally equivalent. Then, there exists a diffeomorphism $\Psi:M\to M$ such that $\Psi_{*}\xi_0=\xi_1$.
\end{lemma}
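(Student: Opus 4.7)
The plan is to imitate the argument from \cite{El2} for overtwisted contact structures on $\R^3$. Fix a compact exhaustion $M=\bigcup_{n\geq 0} K_n$ with $K_n\subset\mathrm{int}(K_{n+1})$, together with a path $\{(\eta_t,J_t)\}_{t\in[0,1]}$ of formal contact structures with $\eta_0=\xi_0$ and $\eta_1=\xi_1$ provided by the formal equivalence hypothesis. I will inductively construct diffeomorphisms $\Psi_n\colon M\to M$ with $\Psi_0=\mathrm{Id}$ such that $(\Psi_n)_{*}\xi_0=\xi_1$ on a neighborhood of $K_n$ and $\Psi_{n+1}$ coincides with $\Psi_n$ on a neighborhood of $K_{n-1}$. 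Both conditions together imply that the sequence $\Psi_n$ is eventually constant on any compact subset of $M$, so the limit $\Psi=\lim_n \Psi_n$ is a well-defined diffeomorphism with $\Psi_{*}\xi_0=\xi_1$ everywhere on $M$.

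For the inductive step, let $V_n\supset K_{n-1}$ be a compact neighborhood on which $\Psi_n$ will be preserved, and carry out the modification inside the compact cobordism $W_n=K_{n+1}\setminus\mathrm{int}(V_n)$. After pushing $K_{n+1}$ out far enough, using that the exhaustion is cofinal and both structures are overtwisted at infinity, we may assume that the interior of $W_n$ contains an overtwisted disk of $\xi_1$ and an overtwisted disk of $(\Psi_n)_{*}\xi_0$. Under these hypotheses, the relative version of Theorem \ref{thm:BEM} applied on $W_n$ with $A=\partial W_n$, prescribing the genuine contact structure $(\Psi_n)_{*}\xi_0$ near $\partial V_n\cap W_n$ and $\xi_1$ near $\partial K_{n+1}\cap W_n$, converts the formal homotopy obtained by transporting $\eta_t$ through $\Psi_n$ into a genuine isotopy of contact structures on $W_n$ interpolating between $(\Psi_n)_{*}\xi_0$ and $\xi_1$ relative to the boundary. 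Gray's stability theorem then turns this isotopy, compactly supported in $W_n$, into a contactomorphism $\Phi_n\colon M\to M$ that equals the identity outside $W_n$ and sends $(\Psi_n)_{*}\xi_0$ to $\xi_1$ near $K_{n+1}$; setting $\Psi_{n+1}=\Phi_n\circ\Psi_n$ completes the induction.

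The main obstacle is the correct deployment of the relative form of Theorem \ref{thm:BEM} for the compact cobordism $W_n$. The formulation recalled in the excerpt is phrased for a closed manifold, whereas here one needs the h-principle for a compact manifold with boundary, with prescribed contact germs on a neighborhood of $\partial W_n$ and with an overtwisted disk in the interior. This relative statement is the version actually proved in \cite{BEM}, but its careful invocation, together with the verification that each $W_n$ can be arranged to contain overtwisted disks of both $(\Psi_n)_{*}\xi_0$ and $\xi_1$, is the technical content of the argument; the remaining steps (building the formal homotopy, applying Gray stability, and assembling the limit) are routine.
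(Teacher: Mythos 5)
Your overall strategy (compact exhaustion, relative overtwisted $h$-principle on shells, Gray stability, passage to the limit) is the right one and is what the paper means by ``follows, verbatim, [El2]''. However, the inductive step as you set it up cannot work. You ask for a diffeomorphism $\Phi_n$ that is the identity outside $W_n=K_{n+1}\setminus\mathrm{int}(V_n)$ and satisfies $(\Phi_n)_*(\Psi_n)_*\xi_0=\xi_1$ on a neighborhood of $K_{n+1}$. Since $\Phi_n$ is the identity on the outer side of $\partial K_{n+1}$, continuity forces $(\Psi_n)_*\xi_0=\xi_1$ along $\partial K_{n+1}$ already, which is false in general. The same defect appears one step earlier in your invocation of Theorem \ref{thm:BEM}: with $A=\partial W_n$ and the germ $\xi_A$ prescribed as $(\Psi_n)_*\xi_0$ on the inner component and $\xi_1$ on the outer one, the restriction $(\Psi_n)_*\xi_0|_{W_n}$ does \emph{not} belong to $\Cont_{ot}(W_n,\partial W_n,\xi_A)$, because it disagrees with $\xi_A$ near the outer component; no isotopy relative to $\partial W_n$ can join two structures that differ on $\partial W_n$, so the $h$-principle cannot deliver what you ask of it.

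The missing idea is the ``swindle'' the lemma is named after. One applies the relative $h$-principle on each shell relative to the \emph{inner} boundary only, leaving the outer boundary free; the price is that one obtains open contact embeddings $(\mathrm{Op}(K_n),\xi_0)\hookrightarrow(M,\xi_1)$ whose images one does not control, so the naive limit need not be surjective. Surjectivity is recovered by a back-and-forth: alternately extend embeddings $f_n\colon(\mathrm{Op}(K_n),\xi_0)\to(M,\xi_1)$ and embeddings $g_n$ in the opposite direction extending $f_n^{-1}$ over ever larger compacta of the target, so that the two limits are mutually inverse. Overtwistedness at infinity of \emph{both} structures is exactly what guarantees that every shell used in either direction contains the overtwisted disks needed to invoke Theorem \ref{thm:BEM} (and Gray stability must be cut off near the free outer boundary, where the isotopy of contact structures is not constant, another point your write-up elides). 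Your identification of where the overtwisted disks come from is correct, but without the one-sided boundary condition and the back-and-forth the induction does not close.
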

The proof is left to the reader. It follows, verbatim, \cite{El2}.

To prove Theorem \ref{thm:main}, we will use the above Lemma together with Theorem \ref{thm:product}. Hence we will need to find a contact manifold $N$ such that $N\times\D^2$ is overtwisted at infinity. The next {\em folklore} result shows that it suffices $N$ to be an overtwisted manifold.

\begin{proposition}\label{prop:OTInfty}
Let $(N, \ker(\alpha))$ be an overtwisted contact manifold. Then $(N \times \R^2, \ker(\alpha+r^2 d \theta))$ is overtwisted at infinity.
\end{proposition}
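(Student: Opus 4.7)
The plan is, given a compact set $K\subset N\times\R^2$, to produce an overtwisted chart inside $(N\times\R^2)\setminus K$. Since $K$ is compact, its $\R^2$-projection is contained in some open ball $\B^2_{R_0}(0)$. I would pick a point $p\in\R^2$ with $|p|>R_0+R(n+1)$ and set $B\defeq\B^2_{R(n+1)}(p)$, so that $B\cap\pi_{\R^2}(K)=\emptyset$ and hence $N\times B\subset(N\times\R^2)\setminus K$. The problem reduces to finding an overtwisted chart of $N\times\R^2$ inside $(N\times B,\alpha+r^2d\theta)$.

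The key step is to identify this with a standard product contact model. Translating to coordinates $(u,v)\defeq(x-p_x,y-p_y)$ centered at $p$, an elementary computation gives
\[r^2d\theta=(u\,dv-v\,du)+df,\qquad f(u,v)\defeq p_xv-p_yu,\]
so $\alpha+r^2d\theta=\alpha+2\lambda_{std}^{(u,v)}+df$ on $N\times B$. Introduce the \emph{Reeb shearing}
\[\Phi_f\co N\times B\to N\times B,\qquad(q,u,v)\mapsto\bigl(\phi^{f(u,v)}_{R_\alpha}(q),u,v\bigr),\]
where $\phi^{t}_{R_\alpha}$ denotes the Reeb flow of $\alpha$ on $N$. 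Since the Reeb flow preserves $\alpha$ and $\Phi_f$ fixes the $(u,v)$-coordinates, a direct calculation yields $\Phi_f^{*}\alpha=\alpha+df$ and hence $\Phi_f^{*}(\alpha+2\lambda_{std})=\alpha+r^2d\theta$. Post-composing with the rescaling $\sigma(u,v)\defeq(\sqrt 2\,u,\sqrt 2\,v)$, which satisfies $\sigma^{*}\lambda_{std}=2\lambda_{std}$, one obtains a strict contactomorphism
\[\Phi_f\circ\sigma^{-1}\co(N\times\B^2_{R(n+1)\sqrt 2}(0),\,\alpha+\lambda_{std})\xrightarrow{\;\cong\;}(N\times B,\,\alpha+r^2d\theta).\]

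In the standard product on the left an overtwisted chart is immediate: taking the product of the given overtwisted chart $\phi_{ot}\co(\B^3_{2\pi}\times\B^{2n-2}_{R(n)},\alpha_{ot}+\lambda_{std})\hookrightarrow N$ with the identity on $\B^2_{R(n+1)\sqrt 2}(0)$ yields a contact embedding of $(\B^3_{2\pi}\times\B^{2n-2}_{R(n)}\times\B^2_{R(n+1)\sqrt 2}(0),\,\alpha_{ot}+\lambda_{std}^{2n-2}+\lambda_{std}^{2})$ into the standard model. Using the identity $\lambda_{std}^{2n-2}+\lambda_{std}^{2}=\lambda_{std}^{2n}$ of Liouville forms on $\R^{2n-2}\times\R^2=\R^{2n}$, together with the inclusion $\B^{2n}_{R(n+1)}\subset\B^{2n-2}_{R(n)}\times\B^2_{R(n+1)\sqrt 2}(0)$ — which holds because $R(n+1)\le R(n)$ by the way the constants in \cite{CMP} are chosen and trivially $R(n+1)\le R(n+1)\sqrt 2$ — I obtain an overtwisted chart $(\B^3_{2\pi}\times\B^{2n}_{R(n+1)},\alpha_{ot}+\lambda_{std})\hookrightarrow(N\times\B^2_{R(n+1)\sqrt 2}(0),\alpha+\lambda_{std})$, which via $\Phi_f\circ\sigma^{-1}$ is transported into $(N\times B,\alpha+r^2d\theta)\subset(N\times\R^2)\setminus K$.

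The main technical point is the well-definedness of $\Phi_f$: it requires the Reeb flow of $\alpha$ on $N$ to be defined for times in the bounded range of $f|_B$. This is automatic when $N$ is closed; in general only the portion of the flow along the image of $\phi_{ot}$ — a compact subset of $N$ — enters the final chart, so the required flow times are bounded and can always be realized after replacing $\alpha$ by a contact form, defining the same $\xi$, whose Reeb vector field is complete on that compact region.
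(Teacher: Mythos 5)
Your overall strategy coincides with the paper's: push a ball in the $\R^2$ factor away from the projection of $K$, use the Reeb--shearing diffeomorphism to absorb the exact discrepancy between $r^2d\theta$ and a translated Liouville primitive (this is exactly the paper's Lemma \ref{lem:diff}), and reduce to the overtwistedness of $N\times\B^2_R$ with the product form. However, two steps are genuinely gapped. First, your treatment of completeness does not work as stated. The whole shearing construction uses the Reeb flow of the \emph{given} form $\alpha$, because $\Phi_f^*\alpha=\alpha+df$ relies on $(\phi^t_{R_\alpha})^*\alpha=\alpha$ and $\alpha(R_\alpha)=1$; if you replace $\alpha$ by $\tilde\alpha=e^g\alpha$ to gain completeness, then $\ker(\tilde\alpha+r^2d\theta)$ is a \emph{different} contact structure on $N\times\R^2$ from $\ker(\alpha+r^2d\theta)$, so proving the statement for it proves nothing about the original structure. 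The paper closes exactly this gap with the radial rescaling $\psi(p,r,\theta)=(p,e^{g/2}r,\theta)$, which satisfies $\psi^*(\tilde\alpha+r^2d\theta)=e^g(\alpha+r^2d\theta)$ and hence identifies the two structures. Also, the claim that compactness of the image of $\phi_{ot}$ plus boundedness of $f|_B$ suffices is false on its own: an incomplete Reeb trajectory starting in a compact set can escape $N$ in arbitrarily short time, and here $\sup_B|f|$ grows with $|p|$, hence with $K$, so it is not even small.

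Second, your stabilization step --- obtaining an overtwisted chart of the $(2n+3)$--dimensional manifold $N\times\R^2$ from the product of an overtwisted chart of $N$ with a disk --- hinges on the inclusion $\B^{2n}_{R(n+1)}\subset\B^{2n-2}_{R(n)}\times\B^2_{R'}$, i.e.\ on the monotonicity $R(n+1)\leq R(n)$, which you assert but do not establish; the constants in \cite{CMP} are produced by an inductive construction and this inequality is not part of the definition. The paper sidesteps the issue entirely by quoting \cite[Thm. 3.1]{CMP}, which directly states that overtwistedness of $(N,\ker\alpha)$ implies overtwistedness of $(N\times\B^2_R,\ker(\alpha+\lambda_{std}))$ for a suitable $R=R(n)$. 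Your argument would be complete if you replaced the product-of-charts paragraph by that citation and added the rescaling diffeomorphism $\psi$ to handle completeness. (A minor point: as written, $\Phi_f\circ\sigma^{-1}$ goes in the wrong direction; the map pulling $\alpha+r^2d\theta$ back to $\alpha+\lambda_{std}$ is $\Phi_{-f}$ composed with the dilation, but this is immaterial.)
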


Notice that the proposition does not hold in dimension $1$ since there is no notion of overtwistedness in this case. We need the following elementary

\begin{lemma}\label{lem:diff}
Let $(M,\xi=\ker(\alpha))$ be a contact manifold satisfying that $R_{\alpha}$ is complete. Denote the associated flow  $\phi^R_t$. Choose $f:\D^2_R\to\R$ a smooth function and $\lambda\in\Omega^1(\R^2)$ a primitive for $\omega_0=dx\wedge dy$. Define on $M\times\D^2_R$ the contact forms $\alpha_0=\alpha+\lambda$ and $\alpha_1=\alpha+\lambda+df$. Then the diffeomorphism

\begin{align*}
\Psi:M\times\D^2_R&\to M\times\D^2_R\\
(p,x,y)&\mapsto(\phi^R_{f(x,y)}(p),x,y)
\end{align*}
satisfies $\Psi^{*}\alpha_1=\alpha_0$.
\end{lemma}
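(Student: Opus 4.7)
The proof reduces to a direct tangent-space computation relying on two elementary facts about the Reeb vector field: $\alpha(R_\alpha)=1$ by definition, and $(\phi^R_t)^*\alpha=\alpha$ for all $t\in\R$. The second fact follows from Cartan's formula, since $\mathcal{L}_{R_\alpha}\alpha = d\iota_{R_\alpha}\alpha + \iota_{R_\alpha}d\alpha = d(1) + 0 = 0$; completeness of $R_\alpha$ is precisely what makes the candidate map $\Psi$ globally defined as a diffeomorphism, with smooth inverse obtained by replacing $f$ with $-f$.

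The heart of the proof is to compute $\Psi^*\alpha$. Evaluating at a point $(p,x,y)$ on a tangent vector $(V,a,b)\in T_pM\oplus T_{(x,y)}\D^2_R$, the chain rule applied to $\Psi(p,x,y)=(\phi^R_{f(x,y)}(p),x,y)$ splits the $M$-component of $d\Psi_{(p,x,y)}(V,a,b)$ into a pushforward piece $d\phi^R_{f(x,y)}(V)$ and a ``time-variation'' piece $df_{(x,y)}(a,b)\cdot R_\alpha\bigl(\phi^R_{f(x,y)}(p)\bigr)$; the $\D^2_R$-component is just $(a,b)$. Pairing with $\alpha$ (viewed as a form on the product via projection) and invoking the two facts above yields
$$(\Psi^*\alpha)_{(p,x,y)}(V,a,b) = \alpha_p(V) + df_{(x,y)}(a,b),$$
that is, $\Psi^*\alpha = \alpha + df$ as one-forms on $M\times\D^2_R$, with the convention that $\alpha$, $\lambda$, and $df$ are pulled back from the two factors.

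To conclude, note that since $\Psi$ acts as the identity on the $\D^2_R$ factor, $\Psi^*\lambda=\lambda$ and $\Psi^*df=df$. Combining this with the previous paragraph, applied to the \emph{inverse} diffeomorphism (for which the above calculation gives $\Psi^*\alpha=\alpha-df$, the sign being the only substantive point to check), we obtain
$$\Psi^*\alpha_1 \;=\; \Psi^*\alpha + \Psi^*\lambda + \Psi^*df \;=\; (\alpha - df) + \lambda + df \;=\; \alpha + \lambda \;=\; \alpha_0,$$
as required. The only potential obstacle is purely notational bookkeeping: tracking the sign in the chain-rule term and keeping straight which factor each form lives on. No deeper input is needed.
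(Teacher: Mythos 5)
The paper gives no argument for this lemma (``The proof is left to the reader''), so the natural benchmark is the standard direct computation, and that is exactly what you carry out: the chain rule splits $d\Psi(V,a,b)$ into $d\phi^R_{f}(V)$ plus $df(a,b)\cdot R_\alpha$, and the two identities $\alpha(R_\alpha)=1$ and $(\phi^R_t)^*\alpha=\alpha$ give $\Psi^*\alpha=\alpha+df$, while $\Psi^*\lambda=\lambda$ and $\Psi^*(df)=df$ because $\Psi$ fixes the disk factor. This is correct, and you have also correctly spotted the one genuine subtlety: for the map $\Psi$ \emph{as written in the lemma} the computation yields $\Psi^*\alpha_1=\alpha_0+2\,df$, whereas what actually holds is $\Psi^*\alpha_0=\alpha_1$; the stated conclusion $\Psi^*\alpha_1=\alpha_0$ is obtained only after replacing $f$ by $-f$ (equivalently, passing to $\Psi^{-1}$). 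Either version suffices for the way the lemma is used in Proposition~\ref{prop:OTInfty}. The only criticism is presentational: in your last display you keep writing $\Psi^*$ while silently meaning the inverse map, so you should either state that you redefine $\Psi$ with $\phi^R_{-f}$ or write $(\Psi^{-1})^*\alpha_1=\alpha_0$; as it stands the final equation contradicts your own computation $\Psi^*\alpha=\alpha+df$ two lines earlier. No mathematical idea is missing.
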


The proof is left to the reader.

\begin{proof}[Proof of Proposition \ref{prop:OTInfty}]
Observe that there exists a smooth function $g:N\rightarrow \R$ such that $\tilde \alpha = e^g \alpha$ satisfies that $R_{\tilde \alpha }$ is complete\footnote{Standard fact, choose $g$ to be a ``reasonable" rapidly increasing proper function.}. Moreover we have the following diffeomorphism
\begin{eqnarray*}
\psi:N\times \R^2&\rightarrow& N\times\R^2\\
(p,r,\theta)&\mapsto&(p,e^{g/2}r,\theta),
\end{eqnarray*}
that clearly satisfies $\psi^\ast(\tilde\alpha+r^2d\theta)=e^g (\alpha + r^2 d\theta)$. Therefore, we can assume without loss of generality that $\alpha$ has a complete Reeb vector field.

It is sufficient to show that for any $K>0$, the manifold $ W = (N \times \mathbb{R}^2)\setminus (N \times B_{K}(0,0))$ is overtwisted. Let us prove it.

First, we realize that, since $(N, \ker(\alpha_{ot}))$ is overtwisted, there exists a positive constant $R=R(n)$ such that $(N \times B^2_{R}((0,0)),\ker(\alpha_{ot}+\lambda_{std}))$ is overtwisted \cite[Thm. 3.1]{CMP}. Now, let us consider the manifold $(N \times B^2_{R}((0,K+3R)),\ker(\alpha_{ot}+\lambda_{std}))$ embedded in $W$. We apply Lemma \ref{lem:diff} to show that both manifolds are contactomorphic.  Hence, $(N \times B^2_{R}((0,K+3R)),\ker(\alpha_{ot}+r^2d\theta))$ is overtwisted and thus, $N \times \mathbb{R}^2$  is overtwisted at infinity.

\end{proof}

Equivalently to the overtwisted case, there also exists a subclass of Legendrian embeddings, referred to as loose, which satisfies an $h$--principle type result \cite{Mu}. Let us define this class.

A \textit{formal Legendrian submanifold} $L$ of $M$ is an embedding $\phi:L\to M$ together with a family $\Phi_t:TL\to \phi^{*}TM$ such that $\Phi_t$ is a monomorphism for all $t\in[0,1]$ satisfying that $\Phi_0=d\phi$ and $\Phi_1(TL)\subset \phi^{*}\xi\subset \phi^{*}TM$. Notice that a Legendrian submanifold can be thought of as a formal Legendrian submanifold by letting $\Phi_s=d\phi$ for all $s$. In particular, two Legendrian embeddings $\phi_0$ and $\phi_1$ are formally isotopic if there exists a smooth isotopy $\phi_t$ between them and a homotopy of monomorphisms $\Phi_{t,s}:TL\to\phi_t^{*}TM$ such that $\Phi_{t,0}=d\phi_t$, $\Phi_{0,s}=d\phi_0$, $\Phi_{1,s}=d\phi_1$ and $\Phi_{t,1}(TL)\subset\Phi_t^{*}\xi$.

E. Murphy \cite{Mu} introduced the notion of loose Legendrian submanifolds. They are characterized by the following local model:

Consider an open ball $\B$ around the origin in $(\R^{3}, \xi_{std})$ where $\xi_{std}$ is the standard contact structure on $\R^{3}$ and let $L_0\subset(\R^{3}, \xi_{std})$ be a stabilized Legendrian arc as seen in Figure \ref{Fig:Loose} . Consider the zero section $\Gamma\subset T^{*}M$ of a closed manifold $M$ and denote by $U_{\Gamma}\subset T^{*}M$ an open neighborhood of it. Then, $(L_0\times\Gamma\subset(\B\times U_{\Gamma},\ker(\alpha_{std}+\lambda_{std}))$ is a Legendrian submanifold.

\begin{figure}[h!]
\centering
\includegraphics[scale=0.3]{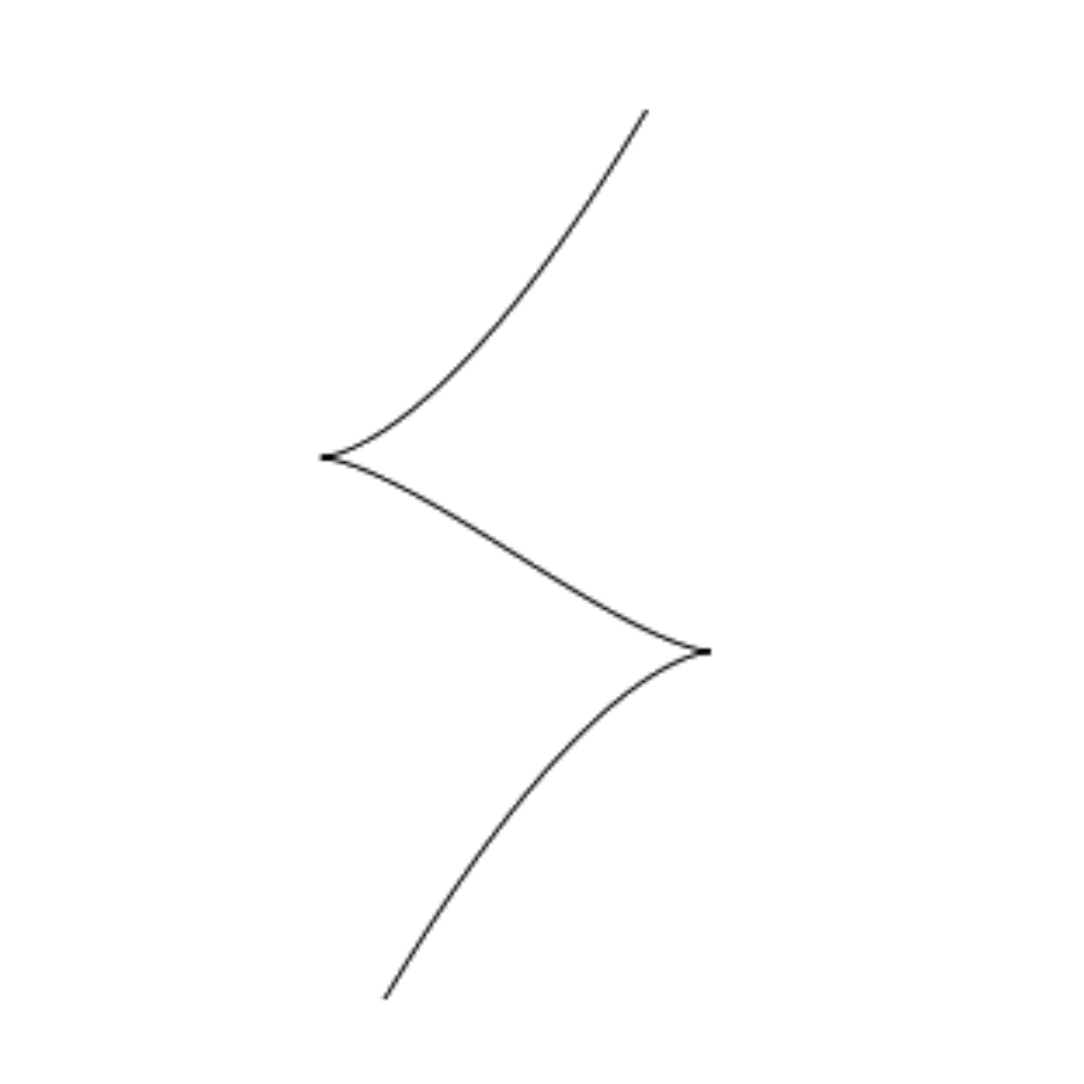}
\caption{The front projection of a stabilized Legendrian arc.}
\label{Fig:Loose}
\end{figure}

\begin{definition}\label{def:loose_chart}
The pair $(L_0\times\Gamma, \B\times U_{\Gamma})$ together with the contact structure $\ker(\alpha_{std}+\lambda_{std})$ is known as a \textit{loose chart}.
\end{definition}

\begin{definition}\label{def:loose}
A Legendrian submanifold $L^n\subset(M^{2n+1},\xi)$ with $n\geq2$ is called \textit{loose} if there exists an open set $U\subset M$ such that $((U, U\cap L), \xi)$ is contactomorphic to a loose chart.
\end{definition}

The corresponding $h$--principle can be stated as follows.

\begin{theorem}[\cite{Mu}]\label{lem:Loose_Surj}
Let $L^n\subset M^{2n+1}$ be a formal Legendrian submanifold with $n>1$. Then, there exists a loose Legendrian submanifold $\widetilde{L}$ such that they are formally isotopic. Moreover, given two formally isotopic loose Legendrians $L_1$ and $L_2$, they are isotopic through loose Legendrians.
\end{theorem}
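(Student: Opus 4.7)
The statement is Murphy's $h$-principle, so I would follow the Gromov-style flexibility paradigm adapted to the Legendrian category.

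For the existence statement, the plan is two-step. First, apply the (parametric) $h$-principle for Legendrian immersions, a consequence of Gromov's convex integration that holds for every $n$: starting from a formal Legendrian $(\phi,\Phi_s)$, one obtains an honest Legendrian immersion $\phi':L\looparrowright M$ formally isotopic to $\phi$. After a generic $C^0$-small perturbation, $\phi'$ has only finitely many transverse double points. The second step is to resolve each of these double points to produce an embedding. I would do this by inserting a zig-zag stabilization in a Darboux chart along a short embedded arc in $L$ connecting the two preimages of the double point. Because $n\geq 2$, in the $(n-1)$ directions transverse to the arc inside $L$ the stabilized picture splits as the product of a stabilized Legendrian arc in $(\R^3,\xi_{std})$ with a zero section $\Gamma\subset T^{*}\R^{n-1}$; by construction this is precisely the loose chart model $L_0\times\Gamma\subset \B\times U_\Gamma$, so the resulting embedding is loose. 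Concatenating the formal isotopies produced at each stage gives the desired formal isotopy back to the original $\phi$.

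For the moreover part, given two loose Legendrians $L_1,L_2$ that are formally isotopic, I would work parametrically. The parametric Legendrian immersion $h$-principle promotes the given formal isotopy to a smooth $1$-parameter family of Legendrian immersions with isolated transverse double-point births/deaths in the parameter; this is generic in $1$-parameter families. To upgrade to an honest Legendrian isotopy through embeddings I would use the fixed loose charts of $L_1$ and $L_2$ as a reservoir: at each birth moment, resolve the nascent double point by inserting a pair of zig-zag stabilizations that are mutually inverse; transport one of them via a contact isotopy along the Legendrian until it meets the ambient loose chart, and cancel it there. The cancellation rests on the model fact that two stabilizations of opposite sign, when performed inside a loose chart, are Legendrian isotopic to the unstabilized Legendrian through a family of loose Legendrians. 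Doing this coherently along the family yields an isotopy of loose Legendrians connecting $L_1$ and $L_2$.

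The main obstacle is this last cancellation/transport step in the parametric argument. Concretely, one must prove (i) that a loose chart can be displaced along the Legendrian by a contact isotopy of the ambient manifold, and (ii) that a pair of opposite stabilizations introduced inside an already loose chart can be annihilated by a Legendrian isotopy whose members all remain loose. Both reduce to explicit model computations in the standard chart $L_0\times\Gamma\subset \B\times U_\Gamma$, exploiting the $(n-1)\geq 1$ transverse directions to hide intermediate moves. This is precisely where the hypothesis $n>1$ is used: in dimension $n=1$ there is no $\Gamma$-factor, no room to carry out the cancellation, and the notion of looseness must be replaced by the genuinely $3$-dimensional condition that the complement be overtwisted.
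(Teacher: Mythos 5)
The paper does not prove this statement: it is imported verbatim from Murphy \cite{Mu} and used as a black box, so there is no internal argument to compare yours against. Judged on its own terms, your outline follows the standard narrative for the existence half (Legendrian immersion $h$-principle, generic perturbation to transverse double points, resolution of each double point by a zig-zag stabilization whose $\Gamma$-factor comes from the $n-1\ge 1$ directions of $L$ transverse to the resolving arc), and your diagnosis of where the hypothesis $n>1$ enters is correct.

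There is, however, a concrete gap already in the existence step: a zig-zag stabilization performed over a disk changes the formal isotopy class of the Legendrian (in Murphy's bookkeeping the change is governed by the relative Euler characteristic of the stabilizing region), so resolving each double point by a single stabilization produces an embedded loose Legendrian that need not be formally isotopic to the original formal Legendrian. One must stabilize over regions of vanishing Euler characteristic, or insert compensating pairs, and track this through the entire resolution; your sketch does not address it. For the ``moreover'' half, which is where the real content of the theorem lies, your transport-and-cancel scheme is a heuristic rather than a proof: Murphy's argument does not manipulate isolated double-point births and deaths directly, but runs an $h$-principle for wrinkled Legendrian embeddings and then removes the wrinkles via the twist-marking formalism inside the loose chart --- precisely the device that makes the cancellation coherent in one-parameter families. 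Your model facts (i) and (ii) are true, but the assertion that they can be carried out ``coherently along the family'' is essentially the theorem itself, not a reduction of it.
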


\section{Proof of Theorem \ref{thm:product}}

Before proving Theorem \ref{thm:product}, we need to introduce a result due to Y. Eliashberg and L. Polterovich \cite{EP} adapted to the Legendrian case by V. Chernov and S. Nemirovski \cite{CN3} which states that if a Legendrian isotopy class contains a non-constant non-negative loop of Legendrians, then it contains a positive loop. More precisely,

\begin{lemma}[{\cite[Prop. 4.5]{CN3}}]\label{lem:main}
Let $\{\phi_t\}$ be a non-negative non-trivial Legendrian loop of closed Legendrians based at $L$. Then, there exists a positive loop of Legendrians $\{\phi_t^{'}\}$  which satisfies that $\phi_0(L) = \phi^{'}_0(L)$.

If we assume that $\phi_t$ is contractible then $\phi_t^{'}$ can be chosen to be contractible.
\end{lemma}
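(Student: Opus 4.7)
My plan follows the Eliashberg--Polterovich bootstrapping strategy, as adapted in \cite{CN3}. First, I extend $\phi_t$ to a contact isotopy $\psi_t$ whose Hamiltonian $H_t$ is non-negative on all of $M$ (not merely on the Legendrian trace); this can be arranged by modifying the extension outside $\phi_t(L)$ via a cutoff, which does not alter the Legendrian loop itself. Non-triviality then yields $(p_0, t_0) \in L \times [0,1]$ with $H_{t_0}(\phi_{t_0}(p_0)) > 0$, and continuity provides an open $V \ni p_0$ in $L$ together with constants $\delta, \varepsilon > 0$ such that $H_t(\phi_t(p)) \geq \delta$ for all $p \in V$ and all $|t - t_0| < \varepsilon$.

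Next I spread this positive ``bubble'' over all of $L$ by conjugation. The Legendrian isotopy extension theorem shows that any self-diffeomorphism of $L$ isotopic to $\mathrm{Id}_L$ lifts to an ambient contactomorphism preserving $L$ setwise; hence, when $L$ is connected, the group of contactomorphisms preserving $L$ acts transitively on $L$. I pick $q_1, \ldots, q_N \in L$ and contactomorphisms $\Psi_1, \ldots, \Psi_N$ preserving $L$ with $\Psi_i(p_0) = q_i$ and such that $\bigcup_i \Psi_i(V) = L$. The conjugated loops $\phi^{(i)}_t = \Psi_i \circ \phi_t \circ \Psi_i^{-1}$ from Section 2.1.3 are then non-negative loops at $L$, each strictly positive on $\Psi_i(V)$ for $|t - t_0| < \varepsilon$.

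I now combine the $\phi^{(i)}_t$ to obtain simultaneous positivity across $L$, and then spread it over $[0,1]$. Composing their extensions with the formula of Section 2.1.2, and using the globally non-negative extensions arranged at the outset, yields a non-negative loop $\phi^A_t$ at $L$ whose Hamiltonian on $L$ is a sum of non-negative contributions; by iterating this composition in $N$ stages, arranging that each $\Psi_i$-conjugate appears as the innermost factor at some stage so that its contribution is evaluated on the trace of $\phi^{(i)}$, one ensures that at every $q \in L$ at time $t_0$ some summand is strictly positive, yielding strict positivity on all of $L$ over an open interval $J \ni t_0$. To reach positivity everywhere on $L \times [0,1]$, I choose smooth reparametrizations $\tau_1, \ldots, \tau_K : [0,1] \to [0,1]$ with $\tau_k(0) = 0$, $\tau_k(1) = 1$, such that for every $t \in [0,1]$ some $\tau_k$ satisfies $\tau_k(t) \in J$ and $\tau_k'(t) > 0$; composing the loops $\phi^A_{\tau_k(t)}$ then produces a non-negative loop whose Hamiltonian is strictly positive everywhere, giving the desired $\phi'_t$ based at $\phi_0(L)$. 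For the contractibility statement, every operation used, namely conjugation by a fixed contactomorphism, composition of loops, and reparametrization, preserves contractibility by applying the same operation pointwise to a null-homotopy. The main technical obstacle lies in controlling the composition formula of Section 2.1.2: it does not directly sum the Hamiltonians on their respective traces but transports the arguments by the preceding contactomorphisms, so the claim ``at each point some summand is strictly positive'' must be secured either by enlarging the positive neighborhoods of the extensions or by ordering and iterating the compositions carefully.
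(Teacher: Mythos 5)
Your proposal follows the same Eliashberg--Polterovich/Chernov--Nemirovski bootstrapping scheme as the paper's proof: locate a point and time of strict positivity, spread it over all of $L$ by conjugating with ambient contactomorphisms that extend a transitive family of flows on $L$ (obtained by cutting off their lifts near the zero section of $T^{*}L\times\R$) and composing the conjugates, then spread the positivity over the whole time circle, with contractibility preserved at every step. The only cosmetic difference is that the paper performs the time-spreading by composing the time-shifted conjugates $(\Phi_{-s})^{-1}\circ\Phi_{t-s}$ over a finite set of shifts rather than by monotone reparametrizations, and it leaves implicit the same transported-argument subtlety in the composition formula of Section 2.1.2 that you correctly flag at the end.
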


\begin{proof}
Given a smooth flow $\psi_t$ in $L$, we lift it to a contact flow $\widetilde{\psi}_t$ in $T^{*}L\times\R$ which preserves the zero-section with associated Hamiltonians $\widetilde{H}_t$. Then, choosing an appropriate cut-off function, we construct a family of contactomorphisms $\widehat{\psi}_t$ with support arbitrary close to the zero-section. Moreover, $\widehat {\psi}_t$ coincides with $\psi_t$ when restricted to $L$.

Now, let $G_t$ be the associated Hamiltonian for an extension $\varphi_t$ of the Legendrian loop $\phi_t$. Recall that $G_t\geq 0$. We can assume that there exists a point $p$ in the Legendrian and a time $t_0$ such that $G_{t_0}(p)>0$. Hence, there exists a neighborhood $U$ of $p\in L$ such that $G_{t_0}(q)>0$, for all $q\in U$. As $L$ is compact and the smooth flows of vector fields act transitively on $L$, there exists a finite set of flows $f_t^i$ such that the open sets $U_1= f_{0}^1(U), \cdots, U_n = f_{0}^n(U)$ cover $L$. Applying the construction above to $f_{t}^1, \cdots,f_{t}^n$, we get a family of contactomorphisms $\widehat{f}_t^1,\cdots,\widehat{f}_t^n$.

The loop $\phi_t^j= \widehat{f}_1^j \circ \phi_t$ with extension $ \varphi_t^{j} = \widehat{f}_1^j \circ \phi_t \circ (\widehat{f}_1^{j})^{-1}$ is positive in $U_j$ at $t_0$. Therefore $\Phi_t =(\varphi_t^1 \circ \cdots \varphi_t^n)$ is an extension of a non--negative loop of Legendrians based at $\phi_0(L)$ that is strictly positive for $t=t_0$. Now, fix $k$ big enough such that
$H(\Phi_t)$ is positive for $t\in[t_0,t_0+\frac{2}{2k}]$. Consider a finite open covering $(t_0, t_0+\frac{2}{2k}), (t_0+\frac{1}{2k}, t_0+\frac{3}{2k}), \cdots,(t_0-\frac{1}{2k},t_0+\frac{1}{2k})$ of $\SSS^1$. Then the conjugated loop $(\Phi_{-s})^{-1}\circ(\Phi_{t-s})_{|\Phi_0(L)}$ with extension $(\Phi_{-s})^{-1}\circ\Phi_{t-s} \circ \Phi_{-s}$ is positive in the interval $(t_0+s, t_0+s+\frac{2}{2k})$ and is based at $L$. Hence, the composition of this loop for $s=0, \frac{1}{2k}, \frac{2}{2k},\cdots, \frac{2k-1}{2k}$ is a positive loop based at $L$.

The proof follows with no changes in the contractible case.
\end{proof}

Theorem \ref{thm:product} is a consequence of the above Lemma.

\begin{proof}[Proof of Theorem \ref{thm:product}]
The contact vector field $X=\frac{\partial}{\partial \theta}$ generates a non--negative loop of contactomorphisms, that is positive away from $M \times \{0\}$.

Since $L$ is a Legendrian submanifold in $M \times D^2$ for dimensional reasons there exists a point of $L$ which is not in the contact submanifold $M \times \{0\}.$ Hence the loop restricted to the Legendrian is a non--negative non--trivial loop of Legendrians. Now we can apply Lemma~\ref{lem:main} to complete the proof.
\end{proof}

\begin{corollary}
Any Legendrian submanifold in $\mathbb{R}^{2n+1}$ admits a positive loop of Legendrians.
\end{corollary}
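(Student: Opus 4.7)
The plan is to deduce this immediately from Theorem \ref{thm:product} by recognizing that $(\mathbb{R}^{2n+1},\xi_{std})$ has an obvious contact-product structure. Starting from the contact form
\[
\alpha_{std}=dz+\tfrac12\sum_{i=1}^n(x_i\,dy_i-y_i\,dx_i)
\]
on $\mathbb{R}^{2n+1}$ and passing to polar coordinates $(r,\theta)$ on the last $\mathbb{R}^2$-factor $(x_n,y_n)$, I would split
\[
\alpha_{std}=\alpha'+\tfrac12 r^2\,d\theta,
\]
where $\alpha'=dz+\tfrac12\sum_{i<n}(x_i\,dy_i-y_i\,dx_i)$ is the standard contact form on $\mathbb{R}^{2n-1}$. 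A rescaling $r\mapsto r/\sqrt 2$ on the last factor is a contactomorphism of the form in Theorem~\ref{thm:product}; equivalently, the contact structure is unchanged if one replaces $\tfrac12 r^2 d\theta$ by $r^2 d\theta$, since these two $1$-forms have the same kernel up to a smooth scaling absorbed by $r$.

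Next, since $L\subset\mathbb{R}^{2n+1}$ is a closed (hence compact) Legendrian, its image has bounded radial coordinate in the $\mathbb{R}^2$-factor, so there exists $\varepsilon>0$ with $L\subset\mathbb{R}^{2n-1}\times\mathbb{D}^2_\varepsilon$. Thus $L$ is a closed Legendrian in the contact product $(\mathbb{R}^{2n-1}\times\mathbb{D}^2_\varepsilon,\ker(\alpha'+r^2 d\theta))$, and Theorem~\ref{thm:product} applies verbatim with $M=\mathbb{R}^{2n-1}$ to produce a positive loop of Legendrians through $L$.

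There is no real obstacle here — the corollary is essentially a repackaging of the product decomposition of $(\mathbb{R}^{2n+1},\xi_{std})$. The only minor point to justify (only needed if one wants to be scrupulous) is the dimensional argument already used in the proof of Theorem~\ref{thm:product}: since $L$ has dimension $n$ while $\mathbb{R}^{2n-1}\times\{0\}$ is a codimension-$2$ contact submanifold in which the maximal isotropic dimension is $n-1$, the Legendrian $L$ cannot be entirely contained in $\mathbb{R}^{2n-1}\times\{0\}$, so the non-negative loop generated by $\partial_\theta$ is non-trivial on $L$, and Lemma~\ref{lem:main} upgrades it to a positive loop.
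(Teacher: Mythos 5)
Your proof is correct and follows essentially the same route as the paper: both identify $(\R^{2n+1},\xi_{std})$ with the contact product $(\R^{2n-1}\times\R^2,\ker(\alpha_{std}+r^2\,d\theta))$ and then invoke Theorem~\ref{thm:product}. The extra details you supply (the rescaling absorbing the factor $\tfrac12$, the compactness argument placing $L$ in $\R^{2n-1}\times\D^2_\varepsilon$, and the dimension count showing $L\not\subset\R^{2n-1}\times\{0\}$) are precisely the steps the paper leaves implicit.
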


\begin{proof}
Since the standard contact manifold $\mathbb{R}^{2n+1}$ is nothing but $\mathbb{R}^{2n-1} \times \mathbb{R}^2$ with the contact structure given by $\alpha_{std} + r^2 d \theta$ where $\alpha_{std}$ is the standard contact form on $\mathbb{R}^{2n-1}.$ The corollary follows from Theorem~\ref{thm:product}.
\end{proof}

\begin{remark}
Being more careful, it can be shown that $\R^{2n+1}$ admits a positive loop of contactomorphisms. This is even true for $M\times\R^2$ just by using Lemma \ref{lem:diff}.
\end{remark}

\section{Proof of Theorem \ref{thm:main}}

The main idea of the proof is to construct a neighborhood $U_L$ of $L$ contactomorphic to $N\times\R^2$ satisfying the hypothesis of Lemma \ref{lem:swindling}. Then the result will follow from Theorem \ref{thm:product}. This is the content of Lemma \ref{lem:Nbhd}.


\begin{lemma}\label{lem:Nbhd}
For any Legendrian submanifold $L\subset(M,\xi)$ satisfying the hypothesis of Theorem \ref{thm:main}, there exists a neighborhood $U_L$ of $L$ diffeomorphic to $N\times\R^2$ such that $(U_L, \xi)$ is overtwisted at infinity and $N$ is an open manifold if $n\geq2$.
\end{lemma}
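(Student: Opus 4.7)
The plan is to build the diffeomorphism $U_L \cong N \times \R^2$ using the Weinstein Legendrian neighborhood theorem together with the splitting hypothesis on $T^{*}L \oplus \R$, and then to transfer the overtwistedness of $M \setminus L$ into the overtwisted at infinity condition on $U_L$.

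First, I would invoke the Weinstein Legendrian tubular neighborhood theorem to identify an open neighborhood $U_L \subset M$ of $L$ with an open neighborhood of the zero section in the $1$-jet bundle $J^{1}L = T^{*}L \oplus \R$, equipped with its canonical contact structure. As a smooth manifold, any such neighborhood of the zero section is diffeomorphic to the whole total space of $T^{*}L \oplus \R$. The hypothesis supplies two linearly independent sections of $T^{*}L \oplus \R$ spanning a trivial rank-$2$ subbundle $\epsilon^{2}$. Picking a complementary rank-$(n-1)$ subbundle $E$ via an auxiliary bundle metric yields a splitting $T^{*}L \oplus \R \cong E \oplus \epsilon^{2}$, whose total space is diffeomorphic to $\operatorname{tot}(E) \times \R^{2}$. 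Setting $N := \operatorname{tot}(E)$ produces the required diffeomorphism $U_L \cong N \times \R^{2}$; when $n \geq 2$, $E$ has positive rank, so $N$ is a non-compact (hence open) manifold.

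For the overtwisted at infinity condition, fix a compact $K \subset U_L$ and let $C$ be a non-compact connected component of $U_L \setminus K$. After enlarging $K$ to include $L$, I have $C \subset U_L \setminus L \subset M \setminus L$. The non-compactness of $C$ inside the open subset $U_L \subset M$ forces $\overline{C}^{M}$ to meet the boundary $\partial_{M} U_L$, which lies inside the overtwisted open subset $M \setminus L$. Near such accumulation points, $C$ contains open subsets of $M \setminus L$ of arbitrarily large diameter. Since $(M \setminus L, \xi)$ is overtwisted, Theorem \ref{thm:BEM} applied to the open manifold $M \setminus L$ yields a contact isotopy supported in $M \setminus L$ that moves the standard overtwisted chart into any open subset large enough to contain it. Applied to $C$, this plants an overtwisted chart inside $C$, so every such $C$ is overtwisted and hence $(U_L, \xi)$ is overtwisted at infinity.

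The main obstacle will be the last step: one has to verify that each non-compact component $C$ contains an open region of diameter at least $R(n)$ near its accumulation on $\partial_{M} U_L$, and that the contact isotopy of $M \setminus L$ moving the overtwisted chart into $C$ can be chosen to avoid $K$. Both points reduce to the compactness of $K$ and $L$ combined with the flexibility supplied by the BEM $h$-principle on the open manifold $M \setminus L$, but they require care to implement precisely.
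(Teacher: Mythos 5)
The first half of your argument (Weinstein neighborhood plus the splitting $T^{*}L\oplus\R\cong E\oplus\epsilon^{2}$ with $N=\operatorname{tot}(E)$) is fine and is exactly what the paper means by ``the first hypothesis''. The genuine gap is in the second half. You take $U_L$ to be the tubular neighborhood itself and try to show that it is already overtwisted at infinity, by arguing that a non-compact component $C$ of $U_L\setminus K$ accumulates on $\partial_M U_L\subset M\setminus L$ and that Theorem \ref{thm:BEM} supplies a contact isotopy of $(M\setminus L,\xi)$ squeezing the overtwisted chart into $C$. Theorem \ref{thm:BEM} does not say this: it classifies overtwisted contact structures up to homotopy \emph{through contact structures}; it produces no ambient contact isotopy of a fixed $\xi$ carrying an overtwisted chart into a prescribed open set. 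Nor can such an isotopy exist in general: overtwistedness of $(M\setminus L,\xi)$ only guarantees a chart \emph{somewhere} in the complement, whereas a sufficiently small tubular neighborhood of $L$ is contactomorphic to a neighborhood of the zero section in $J^{1}L$, so its ends are standard thin collars near $\partial_M U_L$; they need not contain any region of the quantitative size $R(n)$ an overtwisted chart requires (your claim that $C$ contains ``open subsets of $M\setminus L$ of arbitrarily large diameter'' near the accumulation set is false for exactly this reason). So the statement you are trying to prove for the plain tubular neighborhood is not true in general, and the difficulty you flag at the end is not a matter of care but of the approach failing.

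The paper's proof sidesteps this with one extra idea that your proposal is missing: \emph{enlarge} the neighborhood. Choose the tubular neighborhood $V_L\cong N\times\R^2$ disjoint from the overtwisted embedding $U_{ot}\subset M\setminus L$ (possible since the overtwisted chart misses $L$), observe that $U_{ot}$ is itself overtwisted at infinity by \cite{CMP}, and define $U_L$ to be $V_L$ joined to $U_{ot}$ by a tube along a path connecting their boundaries. Attaching a ball-like open set along a tube does not change the diffeomorphism type, so $U_L$ is still diffeomorphic to $N\times\R^2$, and now every neighborhood of infinity of $U_L$ meets the overtwisted-at-infinity piece $U_{ot}$, which is what makes $(U_L,\xi)$ overtwisted at infinity by construction. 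If you replace your final step with this enlargement, the rest of your write-up goes through.
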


\begin{proof}
By the first hypothesis, a small tubular neighborhood $V_L$ of $L$ is diffeomorphic to $N\times\R^2$. By the second hypothesis, there exists an overtwisted disk which does not intersect $L$. Therefore, $V_L$ can be chosen disjoint from the overtwisted embedding $U_{ot}$.  Also,  according to  \cite{CMP}, the overtwisted embedding $U_{ot}$ is overtwisted at infinity. Therefore, $U_L$ will be the connected sum of $V_L$ with $U_{ot}$ along a tubular neighborhood of a path connecting their boundaries (see Figure \ref{Fig:Thm_U_OT}).

\begin{figure}[h!]
\centering
\includegraphics[scale=1.3]{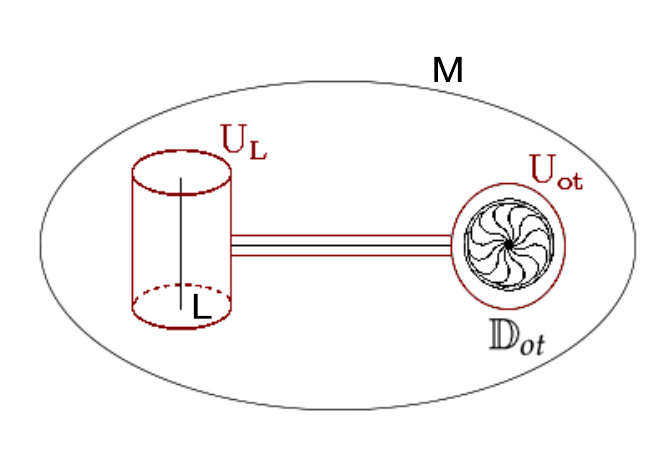}
\caption{Construction of $U_L$.}
\label{Fig:Thm_U_OT}
\end{figure}

Hence $U_L$ is overtwisted at infinity by construction and is diffeomorphic to $\N\times\R^2$.
\end{proof}

In order to apply Lemma \ref{lem:swindling} we use Proposition \ref{prop:OTInfty} to find an overtwisted at infinity contact manifold of type $N\times\R^2$. Hence, 
we have to distinguish two cases.

\subsection{Proof of Theorem \ref{thm:main} for $n>1$.}

It follows from Lemma \ref{lem:Nbhd} that there exists a diffeomorphism $\Phi:U_L\to N \times \mathbb{R}^2$. In addition, $(U_L,\xi)$ is overtwisted at infinity. By Lemma \ref{lem:top_1} the submanifold $N\times\{0\}$ can be equipped with an almost contact structure $(\xi_N, J_N)$ such that $(\xi_N\oplus\R^2,J_N\oplus i)$ represents the same formal contact class as $\xi$.

By Theorem \ref{thm:BEM} there exists an overtwisted contact structure $\xi_{ot}= \ker( \alpha_{ot})$ on $N$ formally homotopic to $\xi_N$. Therefore, the contact structure $\xi'=\ker(\alpha_{ot}+r^2d\theta)$ is overtwisted at infinity by Proposition \ref{prop:OTInfty} and formally homotopic to $\xi$. By Lemma \ref{lem:swindling}, there is a diffeomorphism $F:U_L\to N\times\R^2$ taking $\xi$ to $\xi'$.

By Theorem \ref{thm:product}, $F(L)$ admits a positive loop of Legendrians $\phi_t$. Thus, the family $\phi_t\circ F^{-1}$ is a positive loop of Legendrians for $L$.

\subsection{Proof of Theorem \ref{thm:main} for $n=1$.}\label{sec:3dim}

Let $L\hookrightarrow (M,\xi)$ denote the Legendrian embedding. A tubular neighborhood can be identified with $L\times\D_{\varepsilon}^2\subset(M,\xi)$. By hypothesis, there exists $\varepsilon>0$ such that there is an overtwisted disk which does not intersect $L\times\D_{\varepsilon}^2$. Therefore, we can find a neighborhood $U_L$ of $L$ diffeomorphic to $\SSS^1\times\R^2$ such that $(U_L, \xi)$ is overtwisted at infinity.

Consider now the contact manifold $(\SSS^1(z) \times\R^2(r,\theta), \eta=\ker(dz+r^2d\theta))$. Here, $\partial_z$ is a positive loop with Hamiltonian $H=1$, in particular it is autonomous. Fix a sequence of transverse knots $\gamma_k=(z, \varepsilon(1-1/k), 0)$, with $k\in \mathbb{N}^*$ Then, the contact manifold obtained as a sequence of half Lutz twists along each of them is overtwisted at infinity. It admits a positive loop by \cite{CP}. Denote it by $(\SSS^1\times\D_{\varepsilon}^2, \eta^{\gamma})$.

Finally, $\xi$ and $\eta^{\gamma}$ are formally equivalent because there exists only one class of formal contact structures on $\SSS^1\times\D_{\varepsilon}^2$. Again, the claim follows by using Lemma \ref{lem:swindling}.

\section{Proof of Theorem \ref{thm:loose}}

We will use again Theorem \ref{thm:product}. Hence we need to construct a neighborhood of $L$ contactomorphic to  $(N\times\D^2,\ker(\alpha_N+r^2d\theta)$ with some contact manifold $(N,\alpha_N)$.

We first prove a simple case.

\subsection{Euler characteristic zero}
Assume that $T^{*}L$ has a never--vanishing section. Using Weinstein's tubular neighborhood theorem, we find a neighborhood $U_L$ of $(L,\ker(\alpha))$ contactomorphic to $(T^{*}L\times\R(z),\ker(dz-\lambda_{std}))$. As  $(T^{*}L\setminus\{0\}, d\lambda_{std})\text{ and }(\SSS(T^{*}L)\times\R, d(e^t\lambda_{std}))$ admit a diffeomorphism preserving the Liouville forms, the natural inclusion $\SSS(T^{*}L) \hookrightarrow T^{*}L\times\R$ is a contact embedding. By the tubular neighborhood theorem, there exists a neighborhood $V$ of $\SSS T^{*}L$ contactomorphic to $\SSS T^{*}L\times\D^2_{\varepsilon}$.



The never--vanishing section provides an embedding $\sigma: L \to \SSS T^{*}L\subset T^{*}L\times\R$. Thus, we obtain a family of embeddings $\sigma_t:L \to T^{*}L\times\R$ defined as $\sigma_t=t\sigma$. Since $\sigma_0$ is a Legendrian embedding, the whole family $\sigma_t$ can be lifted into a family $(\sigma_t,\Phi_t)$ of formal Legendrian embeddings.

Applying Theorem \ref{lem:Loose_Surj} to $(\sigma_1, \Phi_1)$ as a formal Legendrian embedding into $V$ we obtain a family $(\sigma_t, \Phi_t)$ with $t\in [1,2]$ of formal embeddings in $V$ satisfying that $\sigma_2$ is a loose Legendrian embedding. The family $(\sigma_t, \Phi_t)$ with $t\in[0,2]$ satisfies the hypothesis of Theorem \ref{lem:Loose_Surj} and so, it can be deformed relative to $t=0,2$ into a Legendrian isotopy inside $M$. We are, thus, reduced to find a positive loop for the loose Legendrian $\sigma_2$. But this is true by Theorem \ref{thm:product} applied to $V$.

\subsection{General case}

By hypothesis, we have that a neighborhood $U_L$ of $L$ is diffeomorphic to $N\times\R^2$, for an open manifold $N$. By Lemma \ref{lem:top_1}, we assume that there is a formal contact structure $(\xi_N, J_N)$ on $N$ such that $(\xi_N\oplus\R^2, J_N\oplus i)$ is the formal contact class of $\xi$. By Theorem {\cite[Thm. 10.3.2]{EM}}, the formal contact structure $\xi_N= \ker \alpha_N$ can be assumed to be contact.

We are in the hypothesis of {\cite[Thm. 12.3.1]{EM}}. Therefore, the formal contact embedding $e_0: N\hookrightarrow N\times\{ 0 \} \subset N\times\R^2\simeq U_L$ admits an isotopy of formal contact embeddings $e_t: N\to U_L$ satisfying that $e_1$ is a contact embedding. By the contact neighborhood theorem, there exists $\phi_1:N\times\D^2_{\varepsilon}\hookrightarrow U_L$, for sufficiently small $\varepsilon>0$, such that

\begin{enumerate}
  \item $(\phi_1)_{|N\times{0}}=e_1$.
  \item Fix the contact form $\alpha=\alpha_N+r^2d\theta$ in the manifold $N\times \D^2_{\varepsilon}$. The map $\phi_1$ is a contact embedding.
\end{enumerate}

%

By construction we have $L\subset N$. Define the family of embeddings $\varphi_t:L\to U_L,\; t\in[0,1]$ as $\varphi_t=(e_t)_{|L}$.

Promote the family $\varphi_t$ into a family of formal Legendrian embeddings $(\varphi_t,\Phi_t), \; t\in[0,1]$. Apply Theorem \ref{lem:Loose_Surj} to $(\varphi_1,\Phi_1)$ as formal Legendrian embedding of the manifold $\phi_1(N\times \D^2_{\varepsilon})$, to create a family of formal Legendrians embeddings $(\varphi_t, \Phi_t)\; t\in[1,2]$ such that $(\varphi_2, \Phi_2)$ is a loose Legendrian embedding into $\phi_1(N\times \D^2_{\varepsilon})$. Since, by hypothesis $\varphi_0$ is loose, we can apply Theorem \ref{lem:Loose_Surj} to show that $\varphi_0$ and $\varphi_2$ are Legendrian isotopic in $M$.

But the image of $\varphi_2$ lies in $\phi_1(N\times\D^2_{\varepsilon})$. Thus, $(\phi_1)^{-1}\circ\varphi_2$ is a Legendrian embedding into $(N\times\D^2_{\varepsilon}, \ker(\alpha_N+r^2d\theta))$. Theorem \ref{thm:product} concludes the claim.

\section{Proof of Theorem \ref{thm:R4} and Corollaries}

\begin{proof}[Proof of Theorem \ref{thm:R4}]
Notice that $\mathrm{U}(2)$ acts by contactomorphisms on $M\times\D^4_{\varepsilon}$. 
%

Now consider the contact vector fields $X_1=\partial_{\theta_1}$ and $X_2=\partial_{\theta_2}$ with associated Hamiltonians $H_1=r_1^2$ and $H_2=r_2^2$, respectively. The contact vector field $X=X_2-X_1=\partial_{\theta_2}-\partial_{\theta_1}$, whose associated Hamiltonian is $H=r_2^2-r_1^2$, generates a loop  that preserves $M^{+}$  and is positive on this domain. Denote by $A_t$ the unitary matrix

\[\left(
\begin{array}{cc}
e^{2\pi it}& 0\\
0& e^{-2\pi it}
\end{array}
\right),\]

then the flow associated to $X$ reads as $\phi_t(p,\left(\begin{aligned}
z_1\\
z_2
\end{aligned}\right))=\left(p, A_t\left(\begin{aligned}
z_1\\
z_2
\end{aligned}\right)\right)$.

Realize that $A_t$ is contractible in $\mathrm{U}(2)$ since $det(A_t)=1$ and $\mathrm{S}\mathrm{U}(2)$ is simply connected. Therefore, there exists a family of loops $\widetilde{A}_{t,s}\in\mathrm{U}(2)$ with $s\in[0,1]$ such that

\[
\begin{array}{c}
\widetilde{A}_{t,0}=Id,\\
\widetilde{A}_{t,1}=A_t.
\end{array}
\]

Hence,
$\phi_{t,s}(p,\left(\begin{aligned}
z_1\\
z_2
\end{aligned}\right))=\left(p, \widetilde{A}_{t,s}\left(\begin{aligned}
z_1\\
z_2
\end{aligned}\right)\right)$
is the contraction of the positive loop.

\end{proof}

\begin{proof}[Proof of Corollary \ref{cor:1}]
We mimic the proof of Theorem \ref{thm:loose}. A neighborhood $U_L$ of $L$ is diffeomorphic to $N\times \R^4$. By application of classical $h$--principles, we can find an isotopy $\phi_t: N \times D_{\varepsilon}^4 \to U_L$ such that is the identity for $t=0$ and is a contact embedding for $t=1$.

Denote by $\varphi_0: L \to U_L$ the given Legendrian embedding. We create a path of formal Legendrians embeddings $(\varphi_t, \Phi_t)$ starting at $\varphi_0$ an such that $\varphi_1(L) \subset \phi_1(N^+) \subset \phi_1(N \times D_{\varepsilon}^4)$. Finally, applying twice Theorem \ref{lem:Loose_Surj} and Theorem \ref{thm:R4}, we conclude the result.
\end{proof}

\begin{proof}[Proof of Corollary \ref{cor:2}]
$(\R^{2n+1},\xi_{std})$ is contactomorphic to $(\R^{2n-3}\times\R^4,\ker{\alpha_{std}+r_1^2d\theta_1+r_2^2d\theta_2})$. By compactness of the Legendrian submanifold, we can assume that $L \subset \R^{2n-3}\times\D^2_R\times\D^2_R$, for some $R>0$.

By a simple refinement of Lemma \ref{lem:diff}, the domains $\R^{2n-3}\times\D^2_R \times \D^2_R$ and $\R^{2n-3}\times\D^2_R(0,0) \times \D^2_R(10R,0)$ are contact isotopic. Therefore, we can assume that the Legendrian embedding lies in $\R^{2n-3} \times \D^2_R(0,0) \times \D^2_R(10R,0) \subset (\R^{2n-3})^+$. We apply Theorem \ref{thm:R4}.
\end{proof}

\begin{proof}[Proof of Corollary \ref{cor:3}]
Consider $(\R^{2n-3},\widetilde{\xi_{ot}}=\ker(\widetilde{\alpha_{ot}}))$ with $\widetilde{\xi_{ot}}$ any overtwisted contact structure on $\R^{2n-3}$.  $(\R^{2n-3}\times\R^4,\ker(\widetilde{\alpha_{ot}}+r_1^2d\theta_1+r_2^2d\theta_2))$ is the overtwisted at infinity contact structure on $\R^{2n+1}$. The complementary of $L$ is overtwisted, then $L$ is loose. We use Theorem \ref{lem:Loose_Surj} to create a contact isotopy of $L$ into $(\R^{2n-3})^+$. We apply Theorem \ref{thm:R4}.
\end{proof}

\end{document}